\documentclass[12pt]{amsart} 
\setlength{\textheight}{23cm}
\setlength{\textwidth}{16cm}
\setlength{\oddsidemargin}{0cm}
\setlength{\evensidemargin}{0cm}
\setlength{\topmargin}{0cm}
\usepackage{amsmath, amssymb, mathscinet}
\usepackage{amsthm} 
\usepackage[all]{xy}
\usepackage{enumerate}
\usepackage[T1]{fontenc}
\usepackage{hyperref} 
\usepackage{mathrsfs} 
\usepackage{graphicx}
\usepackage{cleveref}
\usepackage{color}
%
\renewcommand{\thefootnote}{} 
%
%
\theoremstyle{plain} 
\newtheorem{theorem}{\indent\sc Theorem}[section]
\newtheorem{lemma}[theorem]{\indent\sc Lemma}
\newtheorem{corollary}[theorem]{\indent\sc Corollary}
\newtheorem{proposition}[theorem]{\indent\sc Proposition}

\theoremstyle{definition} 
\newtheorem{definition}[theorem]{\indent\sc Definition}
\newtheorem{remark}[theorem]{\indent\sc Remark}
\newtheorem{example}[theorem]{\indent\sc Example}

\crefname{theorem}{Theorem}{Theorems}
\crefname{lemma}{Lemma}{Lemmae}
\crefname{corollary}{Corollary}{Corollaries}
\crefname{proposition}{Proposition}{Propositions}
\crefname{example}{Example}{Examples}
\crefname{section}{Section}{Sections}
%


%

%

\newcommand{\R}{\mathbb{R}}

\newcommand{\Z}{\mathbb{Z}}
\newcommand{\N}{\mathbb{N}}

\def\dim{\mathop{\mathrm{dim}}\nolimits}

\newcommand{\abs}[1]{\left\lvert#1\right\rvert}

\newcommand{\ds}[1]{d(#1)}


\newcommand{\Hds}[1]{\mathcal{H}(#1)}
%

\newcommand{\sigmacan}{\sigma}
\newcommand{\eval}{\mathrm{Ev}}
\newcommand{\basegeod}{\omega}
\newcommand{\basegeodC}{c}
\newcommand{\rays}{\mathbb{X}_\basegeod}
\newcommand{\raysR}{\rays(\R)}
\newcommand{\raysU}{X_\basegeod}
\newcommand{\raysUC}{X_\basegeodC}
\newcommand{\raysb}{\rays^b}
\newcommand{\raysbEv}[1]{\raysb(#1)}

\newcommand{\minset}{\mathrm{Min}}
\newcommand{\tsl}{s_0}




\newcommand{\showtext}[1]{
  \ifthenelse{\value{shwtxt} > 0}{#1}{}
}


\makeatletter
\def\address#1#2{\begingroup
\noindent\parbox[t]{7.8cm}{%
\small{\scshape\ignorespaces#1}\par\vskip1ex
\noindent\small{\itshape E-mail address}%
\/: #2\par\vskip4ex}\hfill%
\endgroup}%
\makeatother
%
\title[A topological decomposition of Busemann space]
{\uppercase{A topological product decomposition of Busemann space}}
%
\author{
%
%
\textsc{Tomohiro Fukaya} 
}
\date{} 
%

\begin{document}


\footnote{ 
2020 \textit{Mathematics Subject Classification}.
Primary 51F30; Secondary 53C23.
}
\footnote{ 
\textit{Key words and phrases}. 
Non-positively curved spaces, 
}

\thanks{T.Fukaya was supported by JSPS KAKENHI Grant Number JP19K03471}
\renewcommand{\thefootnote}{\fnsymbol{footnote}} 

\begin{abstract}
 We show that a Busemann space $X$ which is covered by parallel bi-infinite geodesics
 is homeomorphic to a product of another Busemann space $Y$ and the real line.
 We also show that a semi-simple isometry on $X$ preserving the foliation by parallel geodesics 
 canonically induces a semi-simple isometry on $Y$.
\end{abstract}

\maketitle

\section{Introduction}
There are several generalization of the class of simply connected Riemannian 
manifolds with nonpositive sectional curvature. One of the most studied class
is that of CAT(0) spaces.
There are many results on the structure of CAT(0) spaces. One of the most fundamental one
is the following. We say that two geodesics $\gamma,\eta\colon \R\to X$ on a metric space $X$
is \textit{parallel} if $d(\gamma(t),\eta(t))$ is bounded.
\begin{theorem}[A Product Decomposition Theorem {\cite[II 2.14 ]{MR1744486}}]
\label{thm:CAT0Decomp}
 Let $X$ be a CAT(0) space and let $\basegeodC\colon\R\to X$ be a geodesic.
 \begin{enumerate}[$(1)$]
  \item \label{item:XetaCATconvex}
        Let $\raysUC$ be the union of the images of all geodesics $\gamma\colon\R\to X$
        parallel to $\basegeodC$. Then $\raysUC$ is a convex subspace of $X$.
  \item \label{item:CATdecomposition} Let $p$ be the restriction to
        $\raysUC$ of the nearest point projection from $X$ to the image
        of the geodesic $\basegeodC(\R)$. Let
        $\raysUC^{0}:=p^{-1}(\basegeodC(0))$.  Then, $\raysUC^{0}$ is
        convex, especially it is a CAT(0) space.
  \item $\raysUC$ is isometric to the product $\raysUC^{0}\times \R$.
 \end{enumerate} 
\end{theorem}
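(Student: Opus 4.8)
The plan is to follow the classical argument, whose essential ingredients are the Flat Strip Theorem (\cite[II 2.13]{MR1744486}) --- any two parallel geodesic lines in a CAT(0) space span a convex subspace isometric to a Euclidean strip $\R\times[0,D]$, the two lines being its boundary --- together with the basic properties of the nearest-point projection $\pi\colon X\to\basegeodC(\R)$ onto the complete convex subspace $\basegeodC(\R)$, namely that it is well defined and $1$-Lipschitz (\cite[II 2.4]{MR1744486}). For part (1) I would take $x,y\in\raysUC$, lying respectively on geodesic lines $\gamma,\gamma'$ parallel to $\basegeodC$ and hence to one another; by the Flat Strip Theorem $\mathrm{conv}(\gamma(\R)\cup\gamma'(\R))$ is a flat strip containing the segment $[x,y]$, and every point of a flat strip lies on a geodesic line parallel to the boundary lines --- hence parallel to $\gamma$, hence to $\basegeodC$ --- so $[x,y]\subseteq\raysUC$ and $\raysUC$ is convex.

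Next I would fix coordinates adapted to $\pi$. Applying the Flat Strip Theorem to $\gamma$ and $\basegeodC$ shows that $\pi$ maps each geodesic line $\gamma$ parallel to $\basegeodC$ isometrically onto $\basegeodC(\R)$, so every such $\gamma$ can be reparametrized --- ``synchronized'' --- so that $\pi(\gamma(t))=\basegeodC(t)$ for all $t$. Two synchronized lines that meet must coincide (being parallel and meeting, they span a degenerate strip), so through each $z\in\raysUC$ there runs a unique synchronized line $\gamma_z$; I set $\sigma(z):=\gamma_z(0)\in\raysUC^{0}$ and let $\tau(z)\in\R$ be defined by $z=\gamma_z(\tau(z))$. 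The crux is a ``no-shear'' lemma: \emph{if $\gamma,\gamma'$ are synchronized, then the coordinates on $S:=\mathrm{conv}(\gamma(\R)\cup\gamma'(\R))\cong\R\times[0,w]$ may be chosen so that $\gamma(t)=(t,0)$ and $\gamma'(t)=(t,w)$.} To prove it, write a priori $\gamma'(t)=(t+q,w)$ for a constant $q$; since $\pi$ is $1$-Lipschitz while $\pi(\gamma(0))=\basegeodC(0)$ and $\pi(\gamma'(t))=\basegeodC(t)$, for $t>0$ one gets $t=d(\basegeodC(0),\basegeodC(t))\le d(\gamma(0),\gamma'(t))=\sqrt{(t+q)^2+w^2}$, which forces $q\ge 0$ as $t\to\infty$; the symmetric inequality $t\le d(\gamma(t),\gamma'(0))=\sqrt{(t-q)^2+w^2}$ forces $q\le 0$, so $q=0$.

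Granting the lemma, parts (2) and (3) become formal. For (2): if $x,y\in\raysUC^{0}$ then $x=\gamma_x(0)$, $y=\gamma_y(0)$, and by the lemma $[x,y]$ is the vertical segment $\{0\}\times[0,w]$ in $S=\mathrm{conv}(\gamma_x(\R)\cup\gamma_y(\R))$; each horizontal line $t\mapsto(t,b)$ of $S$ is a geodesic line parallel to $\basegeodC$, and the same $1$-Lipschitz estimate (now comparing $\pi$ along it with $\pi(\gamma_x(0))=\basegeodC(0)$ and letting $t\to\pm\infty$) shows this line is itself synchronized, so $\pi((0,b))=\basegeodC(0)$; hence $[x,y]\subseteq p^{-1}(\basegeodC(0))=\raysUC^{0}$, so $\raysUC^{0}$ is convex and therefore CAT(0) in the induced metric. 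For (3): $F:=(\sigma,\tau)\colon\raysUC\to\raysUC^{0}\times\R$ is a bijection with inverse $(z_0,t)\mapsto\gamma_{z_0}(t)$, and if $z=\gamma_{z_0}(t)$, $z'=\gamma_{z_0'}(t')$ then the lemma applied to $\gamma_{z_0},\gamma_{z_0'}$ gives $d(z,z')^2=(t-t')^2+w^2$ where $w=d(z_0,z_0')$ (as $z_0,z_0'$ correspond to $(0,0),(0,w)$), i.e.\ $d(z,z')^2=(t-t')^2+d(z_0,z_0')^2$, which is exactly the metric of the product $\raysUC^{0}\times\R$. The one genuinely delicate point is the no-shear lemma; with it in hand, everything else rests only on the Flat Strip Theorem and the elementary properties of $\pi$.
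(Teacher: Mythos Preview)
The paper does not give its own proof of this statement: Theorem~\ref{thm:CAT0Decomp} is quoted from \cite[II~2.14]{MR1744486} as background motivation, and the paper's actual work begins with the Busemann-space analogue (Theorem~\ref{thm:main}). So there is nothing in the paper to compare your argument against.

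That said, your proposal is correct and is essentially the standard Bridson--Haefliger argument. The Flat Strip Theorem gives convexity of $\raysUC$ immediately, and your ``no-shear'' lemma is the heart of the matter: the inequality $t^2\le (t+q)^2+w^2$ for all $t$, coming from the $1$-Lipschitz projection, indeed forces $q=0$ exactly as you say. The deduction of (2) and (3) from this is clean; in particular your verification that the intermediate horizontal lines $t\mapsto(t,b)$ are themselves synchronized (by reapplying the same Lipschitz estimate against $\gamma_x$) is the right way to close the argument for convexity of $\raysUC^{0}$. One small remark: you implicitly use that for any line $\gamma$ parallel to $\basegeodC$ the map $\pi\circ\gamma$ is an isometry of $\R$ onto $\basegeodC(\R)$, so that a synchronized reparametrization exists; this is precisely what the Flat Strip Theorem applied to $\gamma$ and $\basegeodC$ gives, but it is worth saying explicitly.
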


The proof of the above decomposition theorem depends on the nice
behaviour of the nearest point projection onto a bi-infinite geodesic. 
It seems hard to generalize \cref{thm:CAT0Decomp} to
more general class of metric spaces called \textit{Busemann spaces},
due to the luck of such nice property of the projections.
So we considered slightly weaker statement,
and we obtained the following topological decomposition theorem.

\begin{theorem}[A Topological Product Decomposition Theorem]
\label{thm:main}
 Let $X$ be a Busemann space and let $\basegeod\colon\R\to X$ be a geodesic.
 \begin{enumerate}[$(1)$]
  \item \label{item:raysUconvex}
        Let $\raysU$ be the union of the images of all geodesics $\gamma\colon\R\to X$
        parallel to $\basegeod$. Then $\raysU$ is a convex subspace of $X$.
  \item \label{item:XetaBusemann}
        Let $\raysR$ be the set of the images of all geodesics $\gamma\colon\R\to X$
        parallel to $\basegeod$. We equip $\raysR$ with the Hausdorff metric. Then 
        $\raysR$ is a Busemann space.
  \item \label{item:decomposition}
        $\raysU$ is homeomorphic to the product $\raysR\times \R$.
 \end{enumerate}
\end{theorem}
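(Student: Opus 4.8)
The plan is to reduce the decomposition (3) to two elementary facts about the convex structure, after which parts (1) and (2) contain the only genuinely delicate point. Since $X$ is Busemann, $t\mapsto d(\gamma(t),\eta(t))$ is convex for all geodesics $\gamma,\eta$, so a \emph{bounded} such function is constant; hence being parallel to $\basegeod$ is stable under the reparametrizations $\gamma(\cdot)\mapsto\gamma(\cdot+c)$, is transitive (triangle inequality), and any two geodesics parallel to $\basegeod$ stay at constant distance from one another. The first consequence I would record is that each $z\in\raysU$ lies on a \emph{unique} geodesic parallel to $\basegeod$: if it lay on two, reparametrize both to pass through $z$ at time $0$ in the orientation for which they are parallel to $\basegeod$; then $t\mapsto d(\gamma_1(t),\gamma_2(t))$ is bounded, convex and vanishes at $0$, hence is identically $0$. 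This defines a map $\pi\colon\raysU\to\raysR$. For the $\R$-factor, consider the forward Busemann function $b(x):=\lim_{t\to+\infty}\bigl(d(x,\basegeod(t))-t\bigr)$ (the limit exists, being non-increasing and bounded below), which is $1$-Lipschitz and convex. Along any geodesic $\gamma$ parallel to $\basegeod$, arclength-parametrized in the $\basegeod$-direction, $s\mapsto b(\gamma(s))+s$ is convex and bounded, hence constant; so $b$ is affine of slope $-1$ on every parallel geodesic, and $\tau:=-b|_{\raysU}$ restricts to an isometry of each parallel line onto $\R$.

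Now $\Psi:=(\pi,\tau)\colon\raysU\to\raysR\times\R$ is a bijection — injective since $\tau$ is injective on each fibre $\pi^{-1}(\ell)$, surjective since each $(\ell,r)$ is hit by the unique point of $\ell$ with $\tau$-value $r$ — and I claim it is a homeomorphism. If $x_n\to x$, let $\gamma_n,\gamma$ be the arclength $\basegeod$-oriented parametrizations of $\pi(x_n),\pi(x)$ with $\gamma_n(0)=x_n$, $\gamma(0)=x$; these are mutually parallel, so $t\mapsto d(\gamma_n(t),\gamma(t))$ is a bounded convex function, hence the constant $d(x_n,x)$, whence $d_{\mathrm H}(\pi(x_n),\pi(x))\le d(x_n,x)\to0$; with continuity of $\tau$ this gives continuity of $\Psi$. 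Conversely, if $(\ell_n,r_n)\to(\ell,r)$, pick $p\in\ell$ and $p_n\in\ell_n$ with $d(p_n,p)\to0$; parametrizing $\ell_n,\ell$ by $\gamma_n,\gamma$ with $\gamma_n(0)=p_n$, $\gamma(0)=p$, the same computation gives $\sup_t d(\gamma_n(t),\gamma(t))=d(p_n,p)\to0$, so $\gamma_n\to\gamma$ uniformly; since $b$ is affine of slope $-1$ on $\ell_n$, $\Psi^{-1}(\ell_n,r_n)=\gamma_n\bigl(r_n+b(p_n)\bigr)$, which converges to $\gamma\bigl(r+b(p)\bigr)=\Psi^{-1}(\ell,r)$ by continuity of $b$ and uniform convergence of $\gamma_n$. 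Hence $\Psi$ is a homeomorphism, which is (3); note the only nontrivial input is the constant-distance property of parallels.

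For (1) and (2) I would argue as follows. Given $x,y\in\raysU$, take parallel geodesics $\gamma\ni x$, $\eta\ni y$ synchronized so that $\gamma(0)=x$, $\eta(0)=y$ and $d(\gamma(t),\eta(t))\equiv D$, and set $c_u(t):=[\gamma(t),\eta(t)](u)$ for $u\in[0,1]$, the level curves of the ``strip'' cobounded by $\gamma$ and $\eta$; at $t=0$ these sweep out the geodesic $[x,y]$. Convexity of the metric gives at once that $c_u$ is $1$-Lipschitz and $d(c_u(t),\gamma(t))\equiv uD$, so the image of $c_u$ lies within bounded Hausdorff distance of $\basegeod(\R)$; if each $c_u$ is (a reparametrization of) a bi-infinite geodesic, it is therefore parallel to $\basegeod$ and contains $c_u(0)\in[x,y]$, proving $[x,y]\subseteq\raysU$, i.e.\ (1). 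The same level curves yield a path $u\mapsto c_u(\R)$ in $(\raysR,d_{\mathrm H})$ between any two of its points; checking that $d_{\mathrm H}$ is convex along such paths and combining with (1) presents $\raysR$ as a geodesic space with convex metric, i.e.\ (2).

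The step I expect to be the main obstacle is exactly the assertion left open above: that the level curves $c_u$ are geodesics. Convexity of the metric yields only $d(c_u(s),c_u(t))\le|s-t|$, and the two Busemann functions of $\gamma$ yield only the quasigeodesic bound $d(c_u(s),c_u(t))\ge|s-t|-2uD$; in the CAT(0) case the missing inequality is the Flat Strip Theorem, whose proof uses the nearest-point projection onto a geodesic — precisely the tool absent here, and the reason one expects only a topological, not an isometric, splitting. I would try to close this by a finer study of the geodesic quadrilateral $\gamma(s),\gamma(t),\eta(s),\eta(t)$ — for instance by showing $c_u\bigl(\tfrac{s+t}{2}\bigr)$ is the midpoint of $c_u(s)$ and $c_u(t)$ and iterating dyadically, so that $c_u|_{[s,t]}$ has geodesic image — or, if $X$ is complete, by realizing $c_u$ as a limit of the geodesics $[c_u(-n),c_u(n)]$; but I expect either route to need care and perhaps an ingredient beyond pure convexity.
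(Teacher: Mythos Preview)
Your argument for (3) is correct and is essentially the paper's: the paper also uses the Busemann function $b_\basegeod$ to single out a canonical parametrization of each parallel line (it works with $\raysb=\{\gamma\in\rays:b_\basegeod(\gamma(0))=0\}$, bijective with $\raysR$, and the evaluation map $(\gamma,t)\mapsto\gamma(t)$), and the continuity arguments on both sides are the same in spirit. One small point: your claim that $s\mapsto b(\gamma(s))+s$ is bounded deserves a word of justification (the triangle inequality together with $\ds{\gamma(s),\basegeod(s)}\equiv C$ gives $\abs{b(\gamma(s))+s}\le C$); the paper instead computes $b_\basegeod(\gamma(u+s))=b_\basegeod(\gamma(u))-s$ directly from the constant-distance property.

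For (1), the step you flag as the obstacle---that the level curves $c_u$ are genuine geodesics---is precisely Bowditch's \emph{Strip Lemma} for Busemann spaces, which the paper cites rather than proves. Your concern that it might require ``an ingredient beyond pure convexity'' is therefore misplaced: Bowditch's argument stays entirely within the Busemann framework, and once you grant it, (1) is exactly as you say.

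For (2), however, your sketch has a real gap beyond the Strip Lemma. Verifying convexity of $\mathcal{H}$ along the particular strip paths you build does not yet make $\raysR$ Busemann: the definition demands convexity along \emph{every} pair of geodesics, so you must show your strip geodesics are the \emph{only} geodesics in $\raysR$. The paper treats this separately: given an arbitrary geodesic $\Gamma\colon[0,r]\to\raysR$ from $\gamma(\R)$ to $\eta(\R)$ (with $r=\Hds{\gamma,\eta}=\ds{\gamma(0),\eta(0)}$), it lifts $\Gamma$ to a path $\xi$ in $X$ by taking $\xi(u)$ to be the unique nearest point of $\Gamma(u)$ to $\xi(0):=\gamma(0)$, and a short lemma (if $\Hds{\gamma_1,\gamma_3}=\Hds{\gamma_1,\gamma_2}+\Hds{\gamma_2,\gamma_3}$ and $x\in\gamma_1(\R)$, $y\in\gamma_2(\R)$, $z\in\gamma_3(\R)$ satisfy $\ds{x,y}=\Hds{\gamma_1,\gamma_2}$ and $\ds{x,z}=\Hds{\gamma_1,\gamma_3}$, then $\ds{y,z}=\Hds{\gamma_2,\gamma_3}$) forces $\xi$ to be a geodesic in $X$ from $\gamma(0)$ to $\eta(0)$, hence the unique one, hence $\Gamma$ is the strip geodesic. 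Only after this does the conical inequality you have in mind complete the proof. Note too that for your strip path to have the correct length in $\raysR$ you must synchronize $\gamma,\eta$ so that $D=\Hds{\gamma,\eta}$, which uses the existence and uniqueness of the nearest-point projection onto a line in a Busemann space---another point the paper addresses explicitly.
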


In the above theorem, (\ref{item:raysUconvex}) is a direct consequence of 
the \textit{Strip Lemma} (\cref{lem:strip}) by Bowditch \cite{Bow-Minkowskian}.
Our essential contribution is to show (\ref{item:XetaBusemann}) and (\ref{item:decomposition}) 
in \cref{thm:main}.



We also study the classification of isometries on $\raysR$ induced by
those of on $\raysU$. 
For a map $\basegeod\colon \R\to X$, we denote by $\bar{\basegeod}$ the map defined as
$\bar{\basegeod}(t):=\basegeod(-t)$. We say that an isometry 
$g\colon \raysU\to \raysU$ preserve the foliation by $\basegeod$ if
$g\circ \basegeod$ or $g\circ \bar{\basegeod}$ is parallel to $\basegeod$. 

\begin{theorem}
\label{thm:classfyIsometry}
 Let $X$ be a Busemann space and let $\basegeod\colon\R\to X$ be a geodesic.
 Let $g\colon \raysU\to \raysU$ be an isometry  preserving the foliation
 by $\basegeod$.
 Then $g$ induces an isometry $g_\basegeod\colon \raysR\to \raysR$ by 
 $g_\basegeod(L)=g(L)$ for $L\in \raysR$. Moreover, the following holds.
 \begin{enumerate}[$(1)$]
  \item If $g$ is elliptic, then so is $g_\basegeod$.
  \item If $g_\basegeod$ is elliptic, then $g$ is either elliptic or hyperbolic.
  \item Suppose $g$ is hyperbolic. 
        Let $\xi\colon \R\to X$ be an axis of $g$. 
        If $\xi$ is parallel to either $\basegeod$ or $\bar{\basegeod}$, 
        then $g_\basegeod$ is elliptic. Otherwise, $g_\basegeod$ is hyperbolic.
  \item If $g_\basegeod$ is hyperbolic, then so is $g$.        
 \end{enumerate}
 Especially, $g$ is semi-simple if and only if so is $g_\basegeod$.
\end{theorem}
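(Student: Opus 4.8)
The plan is to transport everything through the decomposition of \cref{thm:main} and reduce each statement to an elementary fact about displacement functions. First I would fix the structure maps underlying \cref{thm:main}: the projection $\pi\colon\raysU\to\raysR$ sending $x$ to the unique geodesic parallel to $\basegeod$ through it (uniqueness because two such geodesics through a common point are parallel to each other, so their distance is a bounded convex function on $\R$, hence constant, hence $0$); the canonical $\R$-action $\phi=(\phi_s)$ translating along the parallel geodesics, which is well defined and isometric since $t\mapsto d(\gamma(t),\gamma'(t))$ is bounded and convex, hence constant, for parallel $\gamma,\gamma'$; and the height $h\colon\raysU\to\R$ with $h\circ\phi_s=h+s$, a unit-speed affine parameter along each parallel geodesic. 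From the construction one also has that $\pi$ is a submetry, $d\bigl(x,\pi^{-1}(L)\bigr)=d_{\mathrm{Haus}}(\pi x,L)$, whence $\pi$ carries geodesics of $\raysU$ to geodesics of $\raysR$ and preimages of convex sets to convex sets. Since $g$ preserves the foliation it maps parallel geodesics to parallel geodesics, so $g_\basegeod(L):=g(L)$ is a well-defined bijection of $\raysR$, an isometry for $d_{\mathrm{Haus}}$ (because $g$ is an isometry of $\raysU$), and $\pi\circ g=g_\basegeod\circ\pi$. Finally $g$ either fixes both ends of the foliation (Case (a): $g\circ\basegeod$ parallel to $\basegeod$) or interchanges them (Case (b): $g\circ\bar{\basegeod}$ parallel to $\basegeod$), and accordingly $h\circ g=\varepsilon h+c$ with $\varepsilon=+1$ resp.\ $-1$, $c\in\R$; I set $\hat g:=\phi_{-c}\circ g$, which induces the same $g_\basegeod$ and has $h\circ\hat g=\varepsilon h$.

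Part (1) is immediate: a fixed point $x_0$ of $g$ projects to a fixed point $\pi x_0$ of $g_\basegeod$. For part (2), suppose $g_\basegeod$ fixes $L_0$. Then $\hat g$ preserves the parallel geodesic $L_0\subset\raysU$, acting on it as the identity (Case (a)) or as $t\mapsto -t$ (Case (b)); hence $g=\phi_c\circ\hat g$ acts on $L_0$ as $t\mapsto t+c$ or as the reflection $t\mapsto -t+c$. A reflection has a fixed point, so then $g$ is elliptic; if $c=0$ in the translation case, $g=\hat g$ fixes $L_0$ pointwise and is elliptic; if $c\neq 0$ in the translation case, then $d(x,g^nx)=|nc|$ for $x\in L_0$, so $L_0$ is an axis of $g$ (see the next paragraph) and $g$ is hyperbolic. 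In all cases $g$ is semi-simple.

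Parts (3) and (4) each require exhibiting an axis, and the device I would use is the inequality $\tau_\infty(g)\le|g|\le d(x,gx)$, where $\tau_\infty(g)=\lim_n\tfrac1n d(x,g^nx)$ is the stable translation length: if some point $x$ satisfies $d(x,g^nx)=n\,d(x,gx)$ and $d(x,gx)>0$, then all three quantities agree, so $|g|$ is attained at $x$ and $g$ is hyperbolic. For part (4), let $\eta$ be an axis of $g_\basegeod$, with translation length $\tau>0$, and put $P:=\pi^{-1}(\eta(\R))$; since $\eta(\R)$ is convex and $\pi$ is a submetry, $P$ is convex in $\raysU$. It is a Busemann space covered by parallel geodesics whose space of parallels, with the Hausdorff metric, is $\eta(\R)\cong\R$; by the Strip Lemma (\cref{lem:strip}) — applied to larger and larger pairs of these geodesics — $P$ is isometric to a $2$-dimensional normed plane in which they form a parallel family and $h|_P$ is an affine parameter along them. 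Then $g|_P$, which shifts the line over $\eta(r)$ to the one over $\eta(r+\tau)$ and acts on $h$ by $t\mapsto\varepsilon t+c$, is a nontrivial translation of the plane in Case (a) and a glide reflection in Case (b); either has an axis $\xi\subset P$, which (as $P$ is convex) is a geodesic of $\raysU$ with $d(x,g^nx)=n\,d(x,gx)>0$ along it, so $g$ is hyperbolic.

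For part (3), if the given axis $\xi$ of $g$ is parallel to $\basegeod$ or $\bar{\basegeod}$, then $\xi(\R)\in\raysR$ is $g$-invariant, so $g_\basegeod$ fixes the point $\xi(\R)$ and is elliptic. If $\xi$ is not parallel, I would first note that $g_\basegeod$ has no fixed point: a fixed $L^*$ would give a $g$-invariant parallel geodesic on which $g$ acts by an isometry of $\R$ that is not a reflection nor the identity (either would make $g$ elliptic, contradicting hyperbolicity), hence a nontrivial translation, so $L^*\subset\minset(g)$ by the device above; but $g$ acts on $\minset(g)$ as a Clifford translation, whose axes are mutually parallel, forcing $\xi$ parallel to $L^*$ and hence to $\basegeod$ or $\bar{\basegeod}$ — a contradiction. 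Now $\pi\circ\xi$ is a geodesic of $\raysR$, non-constant (since $\xi(\R)\notin\raysR$) and a full geodesic line — it can be neither eventually constant nor of bounded image, as either would make $g_\basegeod$ fix a point — and $g_\basegeod$ carries it to itself by $\pi\xi(r)\mapsto\pi\xi(r+|g|)$, a nontrivial translation of that line; so $g_\basegeod$ is hyperbolic by the device once more. The concluding equivalence is then formal: ``semi-simple'' means ``elliptic or hyperbolic'', so it is the conjunction of (1)+(3) and of (2)+(4). The main obstacle is exactly the passage, in the last two paragraphs, from the purely topological product of \cref{thm:main} to enough metric control — via the Strip Lemma — to recognize $\pi$-preimages of axes as genuinely flat, $g$-invariant planes; once that is in hand, the stable-translation-length inequality does all the bookkeeping, converting ``invariant flat carrying a nontrivial translation'' into ``hyperbolic'' without ever computing a translation length.
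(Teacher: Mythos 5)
Parts (1)--(3) of your argument are essentially sound and close to the paper's own: a fixed point projects to a fixed line, an invariant line forces $g$ to be elliptic or hyperbolic, and in (3) the projected axis gives an invariant geodesic of $\raysR$. Two of your justifications are misplaced, though: a submetry does \emph{not} in general carry geodesics to geodesics (project $\R^2$ to $[0,\infty)$ by $x\mapsto\absline{x}$), and what you actually need is the paper's lemma that $\sigma_{x_1x_2}(t)\in\Sigma_{\gamma_1\gamma_2}(t)$; likewise the ``Clifford translation on $\minset(g)$'' detour imports a CAT(0) statement and is unnecessary, since a nonconstant constant-speed geodesic line in $\raysR$ translated by $g_\basegeod$ already forces hyperbolicity, and $\pi\circ\xi$ is constant only when $\xi$ is parallel to $\basegeod$ or $\bar\basegeod$.

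The genuine gaps are in your preamble and in part (4). The claim $h\circ g=\varepsilon h+c$ is false in general: take $X=\R^2$ with a strictly convex norm whose unit sphere has a corner in the direction of $\basegeod$; then $b_\basegeod(x,y)=-x-D_{\pm}y$ with different slopes $D_+\neq D_-$ on the two half-planes, so for the vertical translation $g$ (which preserves the foliation) the function $b_\basegeod\circ g-b_\basegeod$ is not constant. Hence no normalization $\hat g=\phi_{-c}\circ g$ exists, and your identification of $g|_P$ as a translation or glide reflection ``because it acts on $h$ by $t\mapsto\varepsilon t+c$'' is unjustified as written (it could be repaired via Mazur--Ulam plus strict convexity of the norm, but you do not do this). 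More seriously, the assertion that $P=\pi^{-1}(\eta(\R))$ is isometric to a two-dimensional normed plane does not follow from the Strip Lemma as quoted (\cref{lem:strip} only gives the double ruling); to get it one must prove translation invariance of the distance in the $(t,u)$-coordinates via \cref{lem:parallel-const} and check consistency over nested strips, which is exactly the work you skip --- and exactly the work the paper avoids: its proof of (4) uses only the single transversal linearly reparametrised geodesic $u\mapsto\beta(\tsl u,ru)$ from \cref{lem:flat-plane} together with an inductive computation showing the $g$-orbit of a base point lies on it, producing an axis of $g$ directly, with no normed-plane structure or classification of planar isometries needed.
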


The organization of the paper is as follows. 
In \cref{sec:comb-busem-space}, 
we briefly review the definition of Busemann spaces
and some terminologies on geodesic bicombings. We will remark a criterion that provide
a sufficient condition for a geodesic space to be a Busemann space.
In \cref{sec:parall-geod-busem}, 
we construct a geodesic bicombing on the space of parallel geodesics and
we prove (\ref{item:XetaBusemann}) of 
\cref{thm:main}.
In \cref{sec:busem-funct-topol}, 
we introduce the Busemann functions and 
we prove (\ref{item:decomposition}) of \cref{thm:main}.
In \cref{sec:isometry}, we review the standard notations on
the classification of isometries, and then
we prove \cref{thm:classfyIsometry}.

\section{Combing and Busemann space}
\label{sec:comb-busem-space}
\subsection{Busemann space}
Let $(X,d)$ be a metric space. A map $\gamma\colon I \to X$
where $I\subset \R$ is a closed interval
is a \textit{geodesic}
if for all $t,s\in I$, we have 
$\ds{\gamma(s),\gamma(t)}=\abs{t-s}$.
We say that $(X,d)$ is a \textit{geodesic space} if for any pair $(x,y)\in X\times X$, 
there exists a geodesic $\gamma\colon [0,\ds{x,y}]\to X$ with $\gamma(0)=x$ and 
$\gamma(\ds{x,y})=y$.
We also say that $(X,d)$ is a \textit{uniquely geodesic space} 
if it is a geodesic space and 
for any pair $(x,y)\in X\times X$, a geodesic between $x$ and $y$ is unique.
A \textit{linearly reparametrised geodesic} is a map of the form $[t\mapsto \gamma(\lambda t)]$
where $\gamma$ is a geodesic and $\lambda$ is a positive real number.

\begin{definition}
 A geodesic space $(X,d)$ is a \textit{Busemann space} if, for any two 
 linearly reparametrised geodesics $\gamma,\eta\colon [0,1]\to X$, 
 the map $[t\mapsto \ds{\gamma(t),\eta(t)}]$ is convex in $t$.
\end{definition}

It is easy to see that a Busemann space is a uniquely geodesic space.

\subsection{Geodesic bicombing}
A geodesic bicombing $\sigma$ on $X$ is a map
\begin{align*}
 \sigma\colon X\times X \times [0,1] \to X
\end{align*}
such that for every pair $(x,y)\in X\times X$, the map 
$\sigma_{xy}:=\sigma(x,y,\cdot)\colon [0,1]\to X$ is a linearly reparametrised geodesic
with $\sigma_{xy}(0)=x$ and $\sigma_{xy}(1)=y$. 
\begin{enumerate}[(1)]
 \item We say that $\sigma$ is \textit{conical} if for all $x,y,x',y'\in X$ and $t\in [0,1]$, 
       we have
       \begin{align*}
        \ds{\sigma_{xy}(t),\sigma_{x'y'}(t)} \leq (1-t)\ds{\sigma_{xy}(0),\sigma_{x'y'}(0)} 
        + t\ds{\sigma_{xy}(1),\sigma_{x'y'}(1)}.
       \end{align*}
 \item We say that $\sigma$ is \textit{convex} if the map 
$[t\mapsto \ds{\sigma_{xy}(t),\sigma_{x'y'}(t)}]$ is convex for all $x,y,x',y'\in X$, that is,
 for any $a,b\in [0,1]$ and $t\in [0,1]$, we have
\begin{align*}
 &\ds{\sigma_{xy}((1-t)a+tb),\sigma_{x'y'}((1-t)a+tb)} \\
 &\leq (1-t)\ds{\sigma_{xy}(a),\sigma_{x'y'}(a)} + t\ds{\sigma_{xy}(b),\sigma_{x'y'}(b)}.
\end{align*}
 \item We say that $\sigma$ is \textit{consistent} if for all $x,y\in X$, 
       $0\leq a\leq b\leq 1$, $p:=\sigma_{xy}(a)$, $q:=\sigma_{xy}(b)$, 
       and $t\in [0,1]$, we have
       \begin{align*}
        \sigma_{pq}(t) = \sigma_{xy}((1-t)a+ tb).
       \end{align*}
\end{enumerate}
\begin{example}
\label{ex:caninical-bicombing} 
 Let $(X,d)$ be a uniquely geodesic space.
 Then $X$ admits a 
 geodesic bicombing $\sigmacan$ such that 
 $\sigmacan_{xy}$ is a unique linearly reparametrised
 geodesic between $x$ and $y$. 
 We call this the \textit{canonical bicombing}.
 Obviously, the canonical bicombing is consistent.
\end{example}

We remark that if a geodesic bicombing is conical and consistent, then it is convex.
Thus we have the following criterion.
\begin{lemma}
\label{lem:criterion_being_Busemann}
 Let $X$ be a uniquely geodesic space. 
 If the canonical bicombing is conical, then it is convex.
 Especially, $X$ is a Busemann space.
\end{lemma}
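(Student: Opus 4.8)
The plan is to verify the two claims of \cref{lem:criterion_being_Busemann} in the obvious order: first, that conicality of the canonical bicombing $\sigmacan$ upgrades to convexity; second, that convexity of $\sigmacan$ is precisely the Busemann condition. For the first part, the remark preceding the lemma already tells us the strategy: a geodesic bicombing that is conical and consistent is convex. Since $\sigmacan$ is consistent by \cref{ex:caninical-bicombing}, it suffices to hypothesize conicality and invoke this remark. If one wants the argument self-contained, I would spell out the implication ``conical $+$ consistent $\Rightarrow$ convex'' as follows: fix $x,y,x',y'\in X$ and $a\le b$ in $[0,1]$; set $p:=\sigmacan_{xy}(a)$, $q:=\sigmacan_{xy}(b)$, $p':=\sigmacan_{x'y'}(a)$, $q':=\sigmacan_{x'y'}(b)$. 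By consistency, for $t\in[0,1]$ we have $\sigmacan_{pq}(t)=\sigmacan_{xy}((1-t)a+tb)$ and likewise for the primed points, so
\begin{align*}
 \ds{\sigmacan_{xy}((1-t)a+tb),\sigmacan_{x'y'}((1-t)a+tb)}
 &= \ds{\sigmacan_{pq}(t),\sigmacan_{p'q'}(t)}\\
 &\le (1-t)\ds{p,p'}+t\ds{q,q'},
\end{align*}
where the inequality is conicality applied to the pairs $(p,q)$ and $(p',q')$. This is exactly the convexity inequality.

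For the second part, I would translate convexity of $\sigmacan$ into the definition of a Busemann space. Let $\gamma,\eta\colon[0,1]\to X$ be linearly reparametrised geodesics; I must show $[t\mapsto \ds{\gamma(t),\eta(t)}]$ is convex. Since $X$ is uniquely geodesic, each of $\gamma$ and $\eta$ is, up to its (affine) parametrisation, the canonical bicombing between its endpoints: writing $x:=\gamma(0)$, $y:=\gamma(1)$, $x':=\eta(0)$, $y':=\eta(1)$, uniqueness forces $\gamma(t)=\sigmacan_{xy}(t)$ and $\eta(t)=\sigmacan_{x'y'}(t)$ for all $t$ (both sides being the unique linearly reparametrised geodesic on $[0,1]$ with the prescribed endpoints). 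Hence $\ds{\gamma(t),\eta(t)}=\ds{\sigmacan_{xy}(t),\sigmacan_{x'y'}(t)}$, and convexity of this function in $t$ is immediate from the convexity of $\sigmacan$ (take $a=0$, $b=1$ in the definition, or use the full statement directly).

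I expect the only delicate point to be the bookkeeping in the second step: one should be careful that ``linearly reparametrised geodesic on $[0,1]$'' allows a nonzero affine speed, and that the endpoints $x,y$ need not be distinct (the degenerate case $x=y$ gives a constant, which is trivially convex). Neither of these causes real trouble. Everything else is a direct unwinding of definitions, so the proof is short; the substantive content has already been isolated into the hypothesis that $\sigmacan$ is conical, which is what one checks in examples.
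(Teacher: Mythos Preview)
Your proposal is correct and follows exactly the route the paper intends: the paper does not give a separate proof of this lemma but simply notes, just before stating it, that a conical and consistent bicombing is convex, and relies on \cref{ex:caninical-bicombing} for consistency of $\sigmacan$. Your write-up merely spells out these two steps (conical $+$ consistent $\Rightarrow$ convex, and convexity of $\sigmacan$ $\Rightarrow$ Busemann via uniqueness of geodesics), so there is nothing to add or correct.
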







\section{Parallel geodesics in Busemann space}
\label{sec:parall-geod-busem}
Let $(X,d)$ be a Busemann space. 

\begin{proposition}
 Let $\gamma\colon \R\to X$ be a geodesic. For $x\in X$, there exists unique
 $y\in \gamma(\R)$ such that 
$\ds{x,y} = \ds{x,\gamma(\R)} = \inf\{\ds{x,w}:w\in \gamma(\R)\}$.
\end{proposition}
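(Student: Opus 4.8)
The plan is to analyse the function $f\colon\R\to\R$, $f(t):=\ds{x,\gamma(t)}$. Existence is the routine half. Since $\gamma$ is an isometric embedding of $\R$, we have $\abs{f(s)-f(t)}\le\ds{\gamma(s),\gamma(t)}=\abs{s-t}$, so $f$ is $1$-Lipschitz, hence continuous; and $f(t)\ge\ds{\gamma(0),\gamma(t)}-\ds{x,\gamma(0)}=\abs{t}-\ds{x,\gamma(0)}\to+\infty$ as $\abs{t}\to\infty$, so $f$ is coercive. A continuous coercive function on $\R$ attains its infimum, and $\inf_t f(t)=\ds{x,\gamma(\R)}$; this produces a point $y=\gamma(t_0)\in\gamma(\R)$ realizing the distance. (One can also record, by the midpoint computation below applied to the geodesics $[\gamma(s),x]$ and $r\mapsto\gamma(s+r(t-s))$ together with continuity, that $f$ is convex, so the minimizing set is a compact interval; but this is not needed.)

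The real content is uniqueness, and the obstacle is that a Busemann metric is convex but in general not \emph{strictly} convex, so one cannot finish by strict convexity of $f$ or of $f^2$ as in the CAT(0) case. Instead I would prove the following coincidence-of-midpoints claim and iterate it: if $z\in X$ and $a<b$ satisfy $\ds{z,\gamma(a)}=\ds{z,\gamma(b)}=\rho$ with $\rho=\ds{z,\gamma(\R)}$, then the midpoint of the geodesic $[z,\gamma(a)]$ equals the midpoint of $[z,\gamma(b)]$, and this common point $n$ satisfies $\ds{n,\gamma(a)}=\ds{n,\gamma(b)}=\ds{n,\gamma(\R)}=\rho/2$. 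To prove it, put $w:=\gamma(\tfrac{a+b}{2})$ and apply the definition of a Busemann space to the two linearly reparametrised geodesics on $[0,1]$ running from $\gamma(a)$ to $z$ and from $\gamma(a)$ to $\gamma(b)$ (the latter being $r\mapsto\gamma(a+r(b-a))$, by uniqueness of geodesics). Evaluating convexity at $r=1/2$ gives $\ds{n_a,w}\le\tfrac12\ds{\gamma(a),\gamma(a)}+\tfrac12\ds{z,\gamma(b)}=\rho/2$, where $n_a$ is the midpoint of $[\gamma(a),z]$. Hence $\ds{z,w}\le\ds{z,n_a}+\ds{n_a,w}\le\rho/2+\rho/2=\rho$, while $\ds{z,w}\ge\ds{z,\gamma(\R)}=\rho$, so all of these are equalities. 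Equality in the triangle inequality $\ds{z,w}=\ds{z,n_a}+\ds{n_a,w}$ places $n_a$ on a geodesic from $z$ to $w$, and $\ds{z,n_a}=\rho/2=\tfrac12\ds{z,w}$, so $n_a$ is the midpoint of $[z,w]$. The same argument with $b$ in place of $a$ shows the midpoint $n_b$ of $[z,\gamma(b)]$ is also the midpoint of $[z,w]$; by uniqueness of geodesics $n_a=n_b=:n$. Finally $\ds{n,\gamma(\R)}\ge\ds{z,\gamma(\R)}-\ds{z,n}=\rho/2$ together with $\ds{n,\gamma(a)}=\rho/2$ forces $\ds{n,\gamma(\R)}=\rho/2$, proving the claim.

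Now suppose for contradiction that $y_1=\gamma(t_1)$ and $y_2=\gamma(t_2)$ with $y_1\ne y_2$ both realize $m:=\ds{x,\gamma(\R)}$, and write $a:=\min(t_1,t_2)$, $b:=\max(t_1,t_2)$, so $a<b$ and $\ds{x,\gamma(a)}=\ds{x,\gamma(b)}=m=\ds{x,\gamma(\R)}$. Set $z_0:=x$ and, inductively, $z_{k+1}:=$ the midpoint of $[z_k,\gamma(a)]$. The claim applies at each stage and shows, by induction on $k$, that $\ds{z_k,\gamma(a)}=\ds{z_k,\gamma(b)}=\ds{z_k,\gamma(\R)}=m/2^{k}$. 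Consequently $\ds{\gamma(a),\gamma(b)}\le\ds{\gamma(a),z_k}+\ds{z_k,\gamma(b)}=m/2^{k-1}\to 0$ as $k\to\infty$, so $\gamma(a)=\gamma(b)$ and hence $y_1=y_2$, a contradiction; thus the nearest point is unique. I expect the verification of the coincidence-of-midpoints claim — in particular, extracting from the equality case of the triangle inequality (together with unique geodesics) that $n_a$ is \emph{exactly} the midpoint of $[z,w]$ — to be the delicate point; the rest is bookkeeping.
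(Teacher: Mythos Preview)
Your proof is correct and follows essentially the same approach as the paper's (which attributes the argument to Kato): both establish that the midpoints of $[x,\gamma(a)]$ and $[x,\gamma(b)]$ coincide via the Busemann convexity inequality applied to the pair of geodesics emanating from $\gamma(a)$, then iterate to force $\ds{\gamma(a),\gamma(b)}\le m/2^{k-1}\to 0$. Your existence argument via coercivity of $t\mapsto\ds{x,\gamma(t)}$ is the same as the paper's properness observation, and your extra care in verifying $\ds{n,\gamma(\R)}=\rho/2$ before iterating is a point the paper leaves implicit; the step you flagged as delicate (that equality in the triangle inequality forces $n_a$ onto the geodesic $[z,w]$) is indeed valid in any uniquely geodesic space, since the concatenation of $[z,n_a]$ and $[n_a,w]$ is a geodesic of the correct length and hence equals $[z,w]$.
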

The following argument is due to Kato \cite{kato-thesis}.
\begin{proof}
 Since $\R$ is proper, so is the image $\gamma(\R)$. It follows that
 there exists $y\in \gamma(\R)$ such that $\ds{x,y} = \ds{x,\gamma(\R)}$.
 
 Now we show that such $y$ is unique. Set $r:=\ds{x,\gamma(\R)}$.
 Suppose that there exists $y,z\in \gamma(\R)$
 with $y\neq z$ such that $\ds{x,y} = \ds{x,z} = r$. 
 Let $\sigmacan$ be the canonical bicombing of $X$. 
 Set $m = \sigmacan_{yz}(1/2)$. By the convexity, and the minimality, $\ds{x,m} = r$.
 By the convexity of the map
 $[t\mapsto \ds{\sigmacan_{yx}(t),\sigmacan_{yz}(t)}]$, we have
 $\ds{\sigmacan_{yx}(1/2),m}\leq r/2$. By the triangle inequality,
 \begin{align*}
  r = \ds{x,m} \leq \ds{x,\sigmacan_{yx}(1/2)}+\ds{\sigmacan_{yx}(1/2),m}\leq r.
 \end{align*}
 So $\ds{\sigmacan_{yx}(1/2),m} = r/2$. 
 By the same argument, we have $\ds{\sigmacan_{zx}(1/2),m} = r/2$. 
 Then by the uniqueness of the geodesic between $x$ and $m$, we have 
 $\sigmacan_{yx}(1/2)=\sigmacan_{zx}(1/2)$. 
 So set $x(1):=\sigmacan_{yx}(1/2)=\sigmacan_{zx}(1/2)$. 
 Replacing $x$ by $x(1)$, we apply the same argument and 
 we obtain $x(2)\in X$ such that 
 $\ds{x(2),y} = \ds{x(2),m} = \ds{x(2),z} = r/2^{2}$.
 Now we can apply inductively
 the same arguments and obtain $x(n)\in X$ for $n\in \N$ such that 
 $\ds{x(n),y} = \ds{x(n),m} = \ds{x(n),z} = r/2^{n}$. It follows that
 $\ds{y,z} \leq r/2^{n-1}\to 0 $ as $n\to \infty$. This is a contradiction.
\end{proof}

\begin{definition}
 We say that two bi-infinite geodesics
 $\gamma,\eta\colon \R\to X$ are \textit{parallel} if $\ds{\gamma(t),\eta(t)}$ is bounded.
 This define an equivalence relation on the set of bi-infinite geodesics.
\end{definition}

\begin{lemma}
 \label{lem:parallel-const}
 Let $\gamma,\eta$ be parallel geodesics. For any constant $C\in \R$, the function
 \begin{align*}
  \R\ni t\mapsto \ds{\gamma(t),\eta(t+C)}\in \R
 \end{align*}
 is constant.
\end{lemma}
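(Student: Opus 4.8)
The plan is to reduce to the case $C=0$ and then exploit convexity of the distance function. First I would note that the map $\eta_C\colon\R\to X$, $\eta_C(t):=\eta(t+C)$, is again a bi-infinite geodesic, since $\ds{\eta_C(s),\eta_C(t)}=\ds{\eta(s+C),\eta(t+C)}=\abs{s-t}$. Moreover $\ds{\eta(t),\eta_C(t)}=\abs{C}$, so the triangle inequality gives $\ds{\gamma(t),\eta_C(t)}\le \ds{\gamma(t),\eta(t)}+\abs{C}$, which is bounded in $t$ because $\gamma$ and $\eta$ are parallel; hence $\gamma$ and $\eta_C$ are parallel. Thus it suffices to treat $C=0$, i.e. to show that $t\mapsto \ds{\gamma(t),\eta(t)}$ is constant for any pair of parallel geodesics $\gamma,\eta$, and then apply this conclusion to the pair $\gamma,\eta_C$.

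Next I would set $f(t):=\ds{\gamma(t),\eta(t)}$ and show that $f$ is convex on all of $\R$. Fix $a<b$ in $\R$. The maps $s\mapsto \gamma(a+s(b-a))$ and $s\mapsto \eta(a+s(b-a))$ for $s\in[0,1]$ are linearly reparametrised geodesics, so the definition of a Busemann space says that $s\mapsto \ds{\gamma(a+s(b-a)),\eta(a+s(b-a))}$ is convex on $[0,1]$; unravelling, this is exactly the statement that $f$ is convex on $[a,b]$. Since $[a,b]$ was arbitrary, $f$ is convex on $\R$.

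Finally, $f$ is bounded because $\gamma$ and $\eta$ are parallel, and a bounded convex function on $\R$ is constant: if $f(s)\ne f(t)$ for some $s<t$, then the difference quotient $\lambda:=(f(t)-f(s))/(t-s)$ is nonzero, and convexity forces $f(u)\ge f(t)+\lambda(u-t)$ for all $u\ge t$ when $\lambda>0$ (respectively $f(u)\ge f(s)+\lambda(u-s)$ for all $u\le s$ when $\lambda<0$), contradicting boundedness. Hence $f$ is constant, which is the claim for $C=0$ and therefore, via the reduction above, for all $C$. The argument is essentially routine; the only step needing a little care is the bookkeeping passing from the $[0,1]$-convexity in the definition of a Busemann space to convexity of $f$ on the whole line, and I do not anticipate a genuine obstacle.
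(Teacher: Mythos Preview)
Your argument is correct and matches the paper's approach: the paper's proof is the one-liner ``since $t\mapsto d(\gamma(t),\eta(t+C))$ is convex and bounded, it is constant,'' and you have simply spelled out the details. The reduction to $C=0$ is harmless but unnecessary, since $t\mapsto\eta(t+C)$ is already a geodesic and the Busemann convexity applies directly to the pair $\gamma$, $\eta(\cdot+C)$; otherwise your write-up is a faithful expansion of the paper's terse proof.
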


\begin{proof}
 Since the map $[t\mapsto \ds{\gamma(t),\eta(t+C)}]$ is convex, if it is bounded,
 it must be constant.
\end{proof}

For closed subsets $H,K\subset X$, we denote by $\Hds{H,K}$ the Hausdorff metric between
$H$ and $K$. For bi-infinite geodesics $\gamma,\eta\colon \R\to X$, we abbreviate
the Hausdorff metric of their images $\Hds{\gamma(\R),\eta(\R)}$ to
$\Hds{\gamma,\eta}$.

\begin{corollary}
\label{cor:Parallel-Haus2}
 Let $\gamma,\eta$ be parallel geodesics.
 For any $t\in \R$, we have
 \begin{align*}
   \Hds{\gamma,\eta} = d(\gamma(t),\eta(\R)) =  \inf_{s\in \R}\ds{\gamma(t),\eta(s)}.
 \end{align*}
\end{corollary}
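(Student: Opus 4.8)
The plan is to use Lemma \ref{lem:parallel-const} as the key tool, exploiting that along parallel geodesics the distance $d(\gamma(t),\eta(t+C))$ is constant in $t$ for every fixed $C$. First I would fix $t\in\R$ and set $r:=d(\gamma(t),\eta(\R))=\inf_{s\in\R}d(\gamma(t),\eta(s))$, the two quantities being equal by definition; by the previous proposition this infimum is attained at a unique point $\eta(s_0)$, so $r=d(\gamma(t),\eta(s_0))$. Writing $C:=s_0-t$, Lemma \ref{lem:parallel-const} tells us that $d(\gamma(u),\eta(u+C))=r$ for \emph{all} $u\in\R$. This shows that the ``shifted'' geodesic $u\mapsto\eta(u+C)$ stays within distance $r$ of $\gamma$ uniformly, and conversely that every point of $\gamma(\R)$ is within $r$ of $\eta(\R)$; and by symmetry (applying the same argument with the roles of $\gamma$ and $\eta$ interchanged, or simply noting that $d(\eta(u+C),\gamma(u))=r$ for all $u$ means every point of $\eta(\R)$ is within $r$ of $\gamma(\R)$) we get $\Hds{\gamma,\eta}\le r$.

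For the reverse inequality $\Hds{\gamma,\eta}\ge r$: by the very definition of Hausdorff distance, $\Hds{\gamma,\eta}\ge \sup_{w\in\gamma(\R)}d(w,\eta(\R))\ge d(\gamma(t),\eta(\R))=r$. Combining the two inequalities yields $\Hds{\gamma,\eta}=r=d(\gamma(t),\eta(\R))=\inf_{s\in\R}d(\gamma(t),\eta(s))$, which is exactly the claim, and since $t$ was arbitrary the value is independent of $t$ — consistent with Lemma \ref{lem:parallel-const} and the fact that $r$ did not depend on $t$ in the first place.

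The only subtle point — and the one I would be most careful about — is the symmetry step establishing that every point of $\eta(\R)$ is within $r$ of $\gamma(\R)$ with the \emph{same} constant $r$; this is not automatic from the one-sided statement alone, but it follows cleanly because $d(\gamma(u),\eta(u+C))=r$ for all $u$ means that for each point $\eta(v)=\eta(u+C)$ (taking $u=v-C$) we have $d(\eta(v),\gamma(\R))\le d(\eta(v),\gamma(v-C))=r$. So in fact the constancy lemma does all the work on both sides with a single constant, and no separate appeal to the uniqueness of nearest-point projections for $\eta$ is needed beyond the one already used to produce $s_0$. I expect no serious obstacle here; the proof is a short assembly of the preceding proposition, Lemma \ref{lem:parallel-const}, and the definition of the Hausdorff metric.
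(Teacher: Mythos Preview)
Your proof is correct and is precisely the argument the paper has in mind: the corollary is stated without proof, as an immediate consequence of the preceding proposition (existence and uniqueness of nearest points on $\eta(\R)$) and Lemma~\ref{lem:parallel-const}, and your write-up fills in exactly those details. There is nothing to add.
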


Bowditch showed the following Strip Lemma.
\begin{lemma}[Strip Lemma \cite{Bow-Minkowskian}]
\label{lem:strip}
 Let $(X,d)$ be a Busemann space. Let $r>0$. We suppose that 
 $\gamma,\gamma'\colon \R\to X$ are parallel geodesics such that 
 $\ds{\gamma(t),\gamma'(t)}=r$.
 For $t\in \R$, let $\beta_t\colon [0,r]\to X$ be the geodesic from 
 $\gamma(t)$ to $\gamma'(t)$. Define a map $\beta\colon \R\times [0,r]\to X$ 
 by $\beta(t,u):=\beta_t(u)$. Then:
 \begin{enumerate}[(1)]
  \item for each $u\in [0,r]$, the map $\gamma_u:=[t\mapsto \beta(t,u)]$
        is a bi-infinite geodesic parallel to $\gamma$ and $\gamma'$;
  \item for each $t_0\in \R$ and $\lambda\in \R$, the maps
        $[u\mapsto \beta(t_0 + \lambda u,u)]$ is a linearly reparametrised geodesic.
 \end{enumerate}
\end{lemma}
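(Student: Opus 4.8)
The plan is to reduce both parts to a single \emph{midpoint lemma}: for all $t_1,t_3\in\R$ and $u_1,u_3\in[0,r]$, the midpoint of the geodesic segment from $\beta(t_1,u_1)$ to $\beta(t_3,u_3)$ equals $\beta\bigl(\tfrac{t_1+t_3}{2},\tfrac{u_1+u_3}{2}\bigr)$, where I write $\beta(t,u)=\beta_t(u)$. Note that $d(\gamma(t),\beta(t,u))=u$ and $d(\gamma'(t),\beta(t,u))=r-u$ for all $t,u$, and that $d(\gamma(t),\gamma'(t))=r$ by hypothesis. To prove the lemma, put $t_2=\tfrac{t_1+t_3}{2}$, $u_2=\tfrac{u_1+u_3}{2}$ and let $q$ be the midpoint in question. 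The geodesic line $\gamma$ carries $t_2$ to the midpoint of $[\gamma(t_1),\gamma(t_3)]$, so by the Busemann convexity of $X$ — applied at parameter $\tfrac12$ to the geodesic from $\gamma(t_1)$ to $\gamma(t_3)$ and the geodesic from $\beta(t_1,u_1)$ to $\beta(t_3,u_3)$ — one gets
\[
 d(\gamma(t_2),q)\le\tfrac12\bigl(d(\gamma(t_1),\beta(t_1,u_1))+d(\gamma(t_3),\beta(t_3,u_3))\bigr)=\tfrac12(u_1+u_3)=u_2,
\]
and the identical argument with $\gamma'$ replacing $\gamma$ gives $d(\gamma'(t_2),q)\le r-u_2$. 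Since $d(\gamma(t_2),\gamma'(t_2))=r=u_2+(r-u_2)$, the triangle inequality $r\le d(\gamma(t_2),q)+d(q,\gamma'(t_2))\le r$ is an equality; as $X$ is uniquely geodesic, $q$ lies on the geodesic $\beta_{t_2}$ from $\gamma(t_2)$ to $\gamma'(t_2)$ at distance $u_2$ from $\gamma(t_2)$, that is, $q=\beta(t_2,u_2)$.

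Granting the midpoint lemma, I would obtain \textup{(1)} as follows. Fix $u$ and set $\gamma_u(t)=\beta(t,u)$. Busemann convexity applied at parameter $u/r$ to the reparametrised geodesics tracing $\beta_{t_1}$ and $\beta_{t_2}$ (whose endpoints lie at distance $|t_1-t_2|$ along $\gamma$ and along $\gamma'$) shows $d(\gamma_u(t_1),\gamma_u(t_2))\le|t_1-t_2|$, so $\gamma_u$ is $1$-Lipschitz, in particular continuous. The midpoint lemma with $u_1=u_3=u$ says $\gamma_u$ sends midpoints of $\R$ to midpoints of geodesic segments, so the standard iterated-bisection argument (density of dyadic parameters, continuity of $\gamma_u$, consistency of the canonical bicombing of the uniquely geodesic space $X$) shows that $\gamma_u$ is a linearly reparametrised geodesic $\R\to X$ of some constant speed $\rho\in[0,1]$. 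Finally $\rho=1$, because for every $t>0$,
\[
 t-u\le d(\gamma(0),\gamma(t))-d(\gamma(0),\gamma_u(0))\le d(\gamma_u(0),\gamma(t))\le d(\gamma_u(0),\gamma_u(t))+d(\gamma_u(t),\gamma(t))=\rho t+u,
\]
forcing $(1-\rho)t\le 2u$ for all $t>0$; hence $\gamma_u$ is a geodesic, and since $d(\gamma_u(t),\gamma(t))=u$ and $d(\gamma_u(t),\gamma'(t))=r-u$ are constant it is parallel to $\gamma$ and to $\gamma'$. For \textup{(2)} I would fix $t_0,\lambda$ and set $c(u)=\beta(t_0+\lambda u,u)$, $u\in[0,r]$, which is continuous. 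For $u_1<u_3$, writing $t_i=t_0+\lambda u_i$ we have $\tfrac{t_1+t_3}{2}=t_0+\lambda\tfrac{u_1+u_3}{2}$, so the midpoint lemma says precisely that the midpoint of $[c(u_1),c(u_3)]$ is $c\bigl(\tfrac{u_1+u_3}{2}\bigr)$; the same iterated-bisection argument then shows $c$ is a linearly reparametrised geodesic (it is non-constant, hence genuinely such; the case $\lambda=0$ is immediate since then $c=\beta_{t_0}$).

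I expect the midpoint lemma to be the one real difficulty. The decisive device is to invoke the Busemann inequality \emph{twice}, once ``towards $\gamma$'' and once ``towards $\gamma'$'', so that the two one-sided bounds $d(\gamma(t_2),q)\le u_2$ and $d(\gamma'(t_2),q)\le r-u_2$ are played against the triangle inequality along the segment $[\gamma(t_2),\gamma'(t_2)]$ of \emph{fixed} length $r$; equality rigidity in a uniquely geodesic space then pins $q$ down exactly. Everything afterwards is routine: iterated bisection plus continuity upgrades the midpoint property to the reparametrised-geodesic statements, and the only genuinely global input — and the only place the bi-infiniteness of $\gamma,\gamma'$ enters — is the short comparison with $\gamma$ that forces $\rho=1$.
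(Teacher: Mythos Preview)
The paper does not prove this lemma at all; it is quoted from Bowditch's paper and used as a black box, so there is no ``paper's own proof'' to compare against. Your argument is correct and self-contained. The midpoint lemma is the right fulcrum: applying Busemann convexity once toward $\gamma$ and once toward $\gamma'$ gives the two one-sided bounds $d(\gamma(t_2),q)\le u_2$ and $d(\gamma'(t_2),q)\le r-u_2$, and since these sum to exactly $r=d(\gamma(t_2),\gamma'(t_2))$ the triangle inequality is saturated; unique geodicity then pins $q$ to $\beta(t_2,u_2)$. This two-sided squeeze against a segment of fixed length $r$ is precisely the mechanism behind the Strip Lemma, and your write-up isolates it cleanly. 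The dyadic-bisection upgrade and the short comparison forcing $\rho=1$ are routine, and you are right that the latter is the only place bi-infiniteness of $\gamma$ enters. One cosmetic remark: in the degenerate situation where $\gamma$ and $\gamma'$ have the same image (e.g.\ $\gamma'(t)=\gamma(t+r)$), the map in (2) can be constant for the single value $\lambda=-1$; this is harmless but worth a parenthetical.
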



\begin{lemma}
\label{lem:flatstrip-geod}
 Let $\gamma,\gamma'\colon \R\to X$ be parallel geodesics
 such that $\ds{\gamma(0),\gamma'(0)} = \Hds{\gamma,\gamma'}$.
 Set $r= \Hds{\gamma,\gamma'}$.
 For $u\in [0,r]$, let $\gamma_u$ be the geodesic given by \cref{lem:strip}
 which is parallel to $\gamma$ and $\gamma'$. Then for $u,v\in [0,r]$, we have
 \begin{align*}
   \Hds{\gamma_u,\gamma_v} = \abs{u-v}.
 \end{align*}
\end{lemma}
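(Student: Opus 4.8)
The plan is to establish the two inequalities $\Hds{\gamma_u,\gamma_v}\le\abs{u-v}$ and $\Hds{\gamma_u,\gamma_v}\ge\abs{u-v}$ separately. The upper bound is immediate; for the lower bound I would first treat the case where one of $u,v$ is an endpoint of $[0,r]$ and then bootstrap to the general case via the triangle inequality for the Hausdorff metric, applied to the triple consisting of $\gamma_u$, $\gamma_v$ and the two ``boundary geodesics'' $\gamma=\gamma_0$, $\gamma'=\gamma_r$.

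First I would record that $\ds{\gamma(t),\gamma'(t)}$ is independent of $t$: since $\gamma,\gamma'$ are parallel, \cref{lem:parallel-const} with $C=0$ applies, and the constant value is $\ds{\gamma(0),\gamma'(0)}=r$. Hence each $\beta_t\colon[0,r]\to X$ is a unit-speed geodesic, so that $\ds{\gamma_u(t),\gamma_v(t)}=\ds{\beta_t(u),\beta_t(v)}=\abs{u-v}$ for all $t$. Moreover the geodesics $\gamma_u$ ($u\in[0,r]$) are pairwise parallel, since by \cref{lem:strip} each is parallel to $\gamma$ and parallelism is an equivalence relation. Therefore \cref{cor:Parallel-Haus2} gives
\[
 \Hds{\gamma_u,\gamma_v}=\inf_{s\in\R}\ds{\gamma_u(0),\gamma_v(s)}\le\ds{\gamma_u(0),\gamma_v(0)}=\abs{u-v},
\]
which is the upper bound.

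For the lower bound in the case $u=0$, I would fix $s\in\R$ and apply the triangle inequality along $\gamma(0),\gamma_v(s),\gamma'(s)$. Using $\ds{\gamma_v(s),\gamma'(s)}=\ds{\beta_s(v),\beta_s(r)}=r-v$ together with $\ds{\gamma(0),\gamma'(s)}\ge\ds{\gamma(0),\gamma'(\R)}=\Hds{\gamma,\gamma'}=r$ (the last two equalities by \cref{cor:Parallel-Haus2} and the hypothesis), one gets $r\le\ds{\gamma(0),\gamma_v(s)}+(r-v)$, i.e. $\ds{\gamma(0),\gamma_v(s)}\ge v$ for every $s$; taking the infimum over $s$ and invoking \cref{cor:Parallel-Haus2} once more yields $\Hds{\gamma_0,\gamma_v}\ge v$, hence $\Hds{\gamma_0,\gamma_v}=v$. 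The same argument with the roles of $\gamma$ and $\gamma'$ exchanged — now using $\ds{\gamma_v(s),\gamma(s)}=\ds{\beta_s(v),\beta_s(0)}=v$ and $\ds{\gamma'(0),\gamma(s)}\ge\Hds{\gamma,\gamma'}=r$ — gives $\Hds{\gamma_v,\gamma_r}=r-v$.

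Finally, for arbitrary $u,v$ I may assume $u\le v$ (the case $u>v$ is symmetric, and $u=v$ is trivial). Since $\gamma_0=\gamma$ and $\gamma_r=\gamma'$, the triangle inequality for the Hausdorff metric, combined with the two special cases just proved, gives
\[
 r=\Hds{\gamma_0,\gamma_r}\le\Hds{\gamma_0,\gamma_u}+\Hds{\gamma_u,\gamma_v}+\Hds{\gamma_v,\gamma_r}=u+\Hds{\gamma_u,\gamma_v}+(r-v),
\]
so $\Hds{\gamma_u,\gamma_v}\ge v-u=\abs{u-v}$, and together with the upper bound this completes the proof. I expect the only genuine point to be the lower bound in the special case: one must recognize that the Hausdorff distance $\Hds{\gamma,\gamma'}$ can be read off as $\ds{\gamma(0),\gamma'(\R)}$ via \cref{cor:Parallel-Haus2} and then fed into the triangle inequality. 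The remaining ingredients are bookkeeping, including the routine observation that each image $\gamma_u(\R)$, being isometric to $\R$, is complete and hence closed in $X$, so that the Hausdorff metric and its triangle inequality are legitimately available.
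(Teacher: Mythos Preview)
Your proof is correct and follows essentially the same route as the paper: the upper bound via $\ds{\gamma_u(0),\gamma_v(0)}=\abs{u-v}$, and the lower bound via the triangle inequality for the Hausdorff metric along the chain $\gamma_0,\gamma_u,\gamma_v,\gamma_r$. The only difference is that your intermediate step---proving the exact equalities $\Hds{\gamma_0,\gamma_v}=v$ and $\Hds{\gamma_v,\gamma_r}=r-v$ by a pointwise argument in $X$---is unnecessary: the paper observes that the upper bounds $\Hds{\gamma_0,\gamma_u}\le u$ and $\Hds{\gamma_v,\gamma_r}\le r-v$ already suffice, since from $r\le\Hds{\gamma_0,\gamma_u}+\Hds{\gamma_u,\gamma_v}+\Hds{\gamma_v,\gamma_r}$ one gets $\Hds{\gamma_u,\gamma_v}\ge r-\Hds{\gamma_0,\gamma_u}-\Hds{\gamma_v,\gamma_r}\ge r-u-(r-v)=v-u$ directly.
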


\begin{proof}
 Let $\beta_0\colon [0,r]\to X$ be the geodesic from $\gamma(0)$ to $\gamma'(0)$.
 For $u\in [0,r]$, by the construction of $\gamma_u$, we have $\gamma_u(0) = \beta_0(u)$, so
 $\beta_0(u)\in \gamma_u(\R)$.
 Then for $0\leq u\leq v\leq r$, we have 
 \begin{align*}
  \Hds{\gamma_u,\gamma_v} \leq \ds{\beta_0(u),\beta_0(v)} = \abs{u-v}.
 \end{align*}
 Suppose $\Hds{\gamma_u,\gamma_v}< \abs{u-v}$. Then 
 \begin{align*}
  r &= \Hds{\gamma,\gamma'} \leq  \Hds{\gamma,\gamma_u} + \Hds{\gamma_u,\gamma_v}
  + \Hds{\gamma_v,\gamma'}\\
  &<  \ds{\beta_0(0),\beta_0(u)} + \ds{\beta_0(u),\beta_0(v)} 
  + \ds{\beta_0(v),\beta_0(r)} =  r.
 \end{align*}
 This is contradiction. Therefore we have $\Hds{\gamma_u,\gamma_v} = \abs{u-v}$.
\end{proof}

The following is the key to the uniqueness of geodesics in $\raysR$.

\begin{lemma}
\label{lem:3geods-line}
 Let $\gamma_1,\gamma_2,\gamma_3$ be a pairwise parallel bi-infinite geodesics in $X$
 such that
 \begin{align*}
  \Hds{\gamma_1,\gamma_3}=\Hds{\gamma_1,\gamma_2} 
  + \Hds{\gamma_2,\gamma_3}.
 \end{align*}
 Let $x\in \gamma_1(\R)$, $y\in \gamma_2(\R)$ and $z\in \gamma_3(\R)$. 
 If $\ds{x,y}=\Hds{\gamma_1,\gamma_2}$ and $\ds{x,z}=\Hds{\gamma_1,\gamma_3}$ holds,
 then we have $\ds{y,z}=\Hds{\gamma_2,\gamma_3}$.
\end{lemma}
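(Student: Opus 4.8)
The plan is to exploit the triangle inequality in a "taut" configuration, very much in the spirit of the proof of \cref{lem:flatstrip-geod}. Write $a:=\Hds{\gamma_1,\gamma_2}$, $b:=\Hds{\gamma_2,\gamma_3}$, so the hypothesis says $\Hds{\gamma_1,\gamma_3}=a+b$, and we are given points $x\in\gamma_1(\R)$, $y\in\gamma_2(\R)$, $z\in\gamma_3(\R)$ with $\ds{x,y}=a$ and $\ds{x,z}=a+b$. First I would record the easy inequality: by \cref{cor:Parallel-Haus2}, $\ds{y,z}\geq \Hds{\gamma_2,\gamma_3}=b$, since $y\in\gamma_2(\R)$ and $z\in\gamma_3(\R)$. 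So the whole content is the reverse inequality $\ds{y,z}\leq b$, which by the triangle inequality would follow if $y$ lies on a geodesic from $x$ to $z$: indeed then $\ds{y,z}=\ds{x,z}-\ds{x,y}=(a+b)-a=b$.

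So the key step is to show that $y$ lies on the (unique) geodesic segment from $x$ to $z$. Here is how I would see this. Let $\sigmacan$ be the canonical bicombing and let $c\colon[0,a+b]\to X$ be the geodesic from $x$ to $z$, and let $w:=c(a)$ be the point on it at distance $a$ from $x$ and $b$ from $z$. I want to show $w=y$. The idea is that $w$ must lie on $\gamma_2(\R)$ with $\ds{x,w}=a=\Hds{\gamma_1,\gamma_2}$, and then uniqueness (the Proposition on unique nearest-point projection onto a bi-infinite geodesic, applied with the geodesic $\gamma_2$ and the point $x\in\gamma_1(\R)$, noting $\ds{x,\gamma_2(\R)}=\Hds{\gamma_1,\gamma_2}=a$ by \cref{cor:Parallel-Haus2}) forces $w=y$. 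To locate $w$ on $\gamma_2(\R)$, apply the Strip Lemma to the parallel pair $\gamma_1,\gamma_3$ after rescaling so that $\ds{\gamma_1(s),\gamma_3(s)}\equiv a+b$ for a suitable reparametrisation aligning $x$ with $z$; more precisely, choose parametrisations so that $\gamma_1(0)=x$ and $\gamma_3(0)$ is the point of $\gamma_3(\R)$ at distance $a+b$ from $x$, which by hypothesis is $z$ (using \cref{cor:Parallel-Haus2} and \cref{lem:parallel-const} to see $\ds{\gamma_1(t),\gamma_3(t)}\equiv a+b$). The transversal geodesic $\beta_0$ from $x$ to $z$ given by the Strip Lemma is then exactly $c$, and the intermediate geodesic $(\gamma_{a})$ produced by the Strip Lemma passes through $w=\beta_0(a)=c(a)$ and is parallel to both $\gamma_1$ and $\gamma_3$ at Hausdorff distance $a$ from $\gamma_1$. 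Finally I would argue $(\gamma_a)$ and $\gamma_2$ coincide: both are bi-infinite geodesics parallel to $\gamma_1$, and by \cref{lem:flatstrip-geod}-type reasoning (or directly by the taut triangle inequality $\Hds{\gamma_1,\gamma_a}+\Hds{\gamma_a,\gamma_3}=a+b=\Hds{\gamma_1,\gamma_2}+\Hds{\gamma_2,\gamma_3}$ together with $\Hds{\gamma_1,\gamma_a}=a=\Hds{\gamma_1,\gamma_2}$) one checks that the image of $\gamma_2$ is "between" that of $\gamma_1$ and $\gamma_3$ in the strip; since the strip coordinate is determined by the distance to $\gamma_1$, we get $\gamma_a(\R)=\gamma_2(\R)$, hence $w\in\gamma_2(\R)$.

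With $w\in\gamma_2(\R)$ and $\ds{x,w}=a=\ds{x,\gamma_2(\R)}$ established, the uniqueness of the nearest point on $\gamma_2(\R)$ to $x$ gives $w=y$. Since $w=c(a)$ lies on the geodesic from $x$ to $z$, the triangle inequality collapses: $\ds{y,z}=\ds{c(a),c(a+b)}=b=\Hds{\gamma_2,\gamma_3}$, as desired.

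The main obstacle I anticipate is the gluing step — cleanly matching the abstract strip produced by the Strip Lemma applied to $\gamma_1,\gamma_3$ with the given geodesic $\gamma_2$, i.e.\ proving that $\gamma_2(\R)$ really is the slice of that strip at distance $a$ from $\gamma_1$. The tautness hypothesis $\Hds{\gamma_1,\gamma_3}=\Hds{\gamma_1,\gamma_2}+\Hds{\gamma_2,\gamma_3}$ is exactly what should make this work, via repeated use of the triangle inequality for the Hausdorff metric on parallel geodesics together with \cref{cor:Parallel-Haus2} and uniqueness of nearest-point projections; but one must be careful that "at Hausdorff distance $a$ from $\gamma_1$ and $b$ from $\gamma_3$" pins down a unique parallel geodesic, which again reduces to the uniqueness Proposition. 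An alternative, perhaps cleaner route avoiding the Strip Lemma altogether: take the geodesic $c$ from $x$ to $z$, let $w=c(a)$, and show directly $\ds{w,\gamma_2(\R)}=0$ by a convexity/minimality iteration like the one in the proof of the uniqueness Proposition — but I expect the Strip Lemma route to be shorter given the machinery already in place.
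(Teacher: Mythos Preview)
Your approach is considerably more elaborate than needed, and the step you yourself flag as the ``main obstacle'' is a real gap that does not close the way you suggest. Knowing that both $\gamma_a$ and $\gamma_2$ sit at Hausdorff distance $a$ from $\gamma_1$ and $b$ from $\gamma_3$ does \emph{not} reduce to the nearest-point-projection Proposition: that Proposition tells you a point has a unique foot on a given geodesic, not that a parallel geodesic at prescribed Hausdorff distances is unique. In fact, pinning down $\gamma_a(\R)=\gamma_2(\R)$ is essentially the statement that geodesics in $\raysR$ are unique, which is exactly what the present lemma is designed to establish (and is used for in \cref{prop:raysRuniqueGoed}); so your argument is in danger of circularity. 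If you try to fill the gap honestly --- take $z''\in\gamma_3(\R)$ nearest to $y$, squeeze $\ds{x,z''}\le \ds{x,y}+\ds{y,z''}=a+b$ against $\ds{x,z''}\ge\Hds{\gamma_1,\gamma_3}=a+b$, and invoke uniqueness of the foot of $x$ on $\gamma_3$ --- you have already proved the lemma without ever touching the Strip Lemma or the auxiliary geodesic $\gamma_a$.

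That last computation is precisely the paper's proof. It bypasses the whole strip construction: choose $z'\in\gamma_3(\R)$ with $\ds{y,z'}=\Hds{\gamma_2,\gamma_3}$; the chain
\[
\Hds{\gamma_1,\gamma_3}\le \ds{x,z'}\le \ds{x,y}+\ds{y,z'}=\Hds{\gamma_1,\gamma_2}+\Hds{\gamma_2,\gamma_3}=\Hds{\gamma_1,\gamma_3}
\]
forces $\ds{x,z'}=\Hds{\gamma_1,\gamma_3}$, and uniqueness of the nearest point to $x$ on $\gamma_3(\R)$ gives $z'=z$, hence $\ds{y,z}=\Hds{\gamma_2,\gamma_3}$. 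The point is to apply the uniqueness Proposition with target $\gamma_3$ (where both $z$ and $z'$ live), not $\gamma_2$ as in your plan; this makes the argument a three-line squeeze and renders the Strip Lemma, the intermediate geodesic $\gamma_a$, and the gluing step entirely unnecessary.
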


\begin{proof}
 We choose $z'\in \gamma_3(\R)$ such that $\ds{y,z'}=\Hds{\gamma_2,\gamma_3}$. Then we have
 \begin{align*}
  \Hds{\gamma_1,\gamma_3}&\leq \ds{x,z'}\leq \ds{x,y}+\ds{y,z'}\\
  &=\Hds{\gamma_1,\gamma_2} + \Hds{\gamma_2,\gamma_3} = \Hds{\gamma_1,\gamma_3}.
 \end{align*}
 It follows that $\ds{x,z'} = \Hds{\gamma_1,\gamma_3}$. 
 Since such $z'\in \gamma_3(\R)$ is unique, we have $z=z'$, and so 
 $\ds{y,z}=\Hds{\gamma_2,\gamma_3}$.
\end{proof}



\begin{definition}
 Let $\gamma\colon \R\to X$ be a geodesic. We denote by $\rays$ the set of 
 all geodesic $\gamma\colon \R\to X$ which is parallel to $\basegeod$. 
 We also denote by
 $\raysR$ the set of images of geodesic in $\rays$,
 and denote by $\raysU$ the union of the images of geodesic in $\rays$. Thus we have
 \begin{align*}
  \raysR = \{\gamma(\R):\gamma\in \rays\}, \quad
  \raysU = \bigcup_{\gamma\in \rays}\gamma(\R) = \bigcup_{L\in \raysR}L.
 \end{align*}
\end{definition}

It follows from \cref{lem:strip} that $\raysU$ is a convex subspace of $X$.
We equip $\raysR$ with a Hausdorff metric $\mathcal{H}$ 
so that $(\raysR,\mathcal{H})$ is a metric space.
Now we construct a geodesic bicombing on $\raysR$.

\begin{definition}
 Let $\gamma,\eta \in \rays$. We suppose that $r:=\ds{\gamma(0),\eta(0)}=\Hds{\gamma,\eta}$.
 Let $\beta\colon \R \times [0,r]\to X$ be a map given in \cref{lem:strip}. 
 For $u\in [0,r]$, set $\gamma_u = [t \mapsto \beta(t,u)]$ as in \cref{lem:strip}. 
 Then a map $[u\mapsto \gamma_u(\R)]$ is a geodesic from $\gamma(\R)$ to $\eta(\R)$
 by \cref{lem:flatstrip-geod}.
 Therefore we define 
 $\Sigma(\gamma,\eta,u)=\Sigma_{\gamma\eta}(u)=[u\mapsto \gamma_{ru}(\R)]$ 
 for $u\in [0,1]$. 
\end{definition}

\begin{remark}
 Let $\gamma,\eta \in \rays$. We suppose that $r:=\ds{\gamma(0),\eta(0)}=\Hds{\gamma,\eta}$.
 Let $a\in \R$. We define $\gamma^a,\eta^a\colon \R\to X$ by $\gamma^a(t):=\gamma(t+a)$ and 
 $\eta^a(t):=\eta(t+a)$ for $t\in \R$. Then we have 
 $\Sigma_{\gamma\eta} = \Sigma_{\gamma^a \eta^a}$.
\end{remark}

It follows that $\raysR$ is a geodesic space. In fact, we have the following.

\begin{proposition}
\label{prop:raysRuniqueGoed}
 $\raysR$ is a uniquely geodesic space. 
 Therefore $\Sigma$ is the canonical bicombing.
\end{proposition}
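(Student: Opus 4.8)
The plan is to prove directly that between any two points of $\raysR$ there is exactly one geodesic; granting this, the second assertion is immediate, because for $\gamma,\eta\in\rays$ the map $\Sigma_{\gamma\eta}$ is, by its very construction together with \cref{lem:flatstrip-geod}, a linearly reparametrised geodesic from $\gamma(\R)$ to $\eta(\R)$, hence must coincide with the (now unique) such geodesic, i.e.\ with $\sigmacan_{\gamma(\R)\eta(\R)}$. So I would fix $L,M\in\raysR$, set $D=\Hds{L,M}$, take two geodesics $c,c'\colon[0,D]\to\raysR$ from $L$ to $M$, and aim to show $c(s)=c'(s)$ for every $s\in[0,D]$.

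The key idea is to transport the problem into the ambient space $X$, where geodesics are unique, using the canonical nearest-point matching supplied by \cref{cor:Parallel-Haus2}. First I would choose a base point $x\in L$; then there is a point $z\in M$ with $\ds{x,z}=\ds{x,M}=\Hds{L,M}=D$ (existence of nearest points together with \cref{cor:Parallel-Haus2}). Fixing $s$, write $L'=c(s)$ and $L''=c'(s)$, and pick $y\in L'$ and $y'\in L''$ with $\ds{x,y}=\ds{x,L'}=\Hds{L,L'}=s$ and $\ds{x,y'}=\Hds{L,L''}=s$, again by \cref{cor:Parallel-Haus2}. Choosing parametrisations of $L,L',M$ as pairwise parallel geodesics, and noting $\Hds{L,M}=\Hds{L,L'}+\Hds{L',M}$ because $c$ is a geodesic, \cref{lem:3geods-line} gives $\ds{y,z}=\Hds{L',M}=D-s$. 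Hence $\ds{x,y}+\ds{y,z}=D=\ds{x,z}$, so $y$ is the point at distance $s$ from $x$ on the unique geodesic segment of $X$ joining $x$ to $z$; running the same computation for $c'$ places $y'$ at exactly that point, so $y=y'$.

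Finally I would upgrade $y=y'\in L'\cap L''$ to $L'=L''$: writing $L'=\delta(\R)$ and $L''=\delta'(\R)$ with $\delta,\delta'\in\rays$, and translating the parameter of $\delta'$ (which keeps it in $\rays$) so that $\delta$ and $\delta'$ agree at $y$, the function $t\mapsto\ds{\delta(t),\delta'(t)}$ is convex (Busemann) and bounded ($\delta,\delta'$ are parallel), hence constant, and it vanishes where the two curves meet, so it is identically $0$ and $\delta=\delta'$. Therefore $c(s)=c'(s)$ for all $s$, $\raysR$ is uniquely geodesic, and consequently $\Sigma=\sigmacan$. I expect the main point needing care to be this last, non-branching step for bi-infinite parallel geodesics, along with checking that the nearest-point matchings $x\mapsto z$ and $x\mapsto y$ are the \emph{same} data fed into \cref{lem:3geods-line}; the remainder is a routine assembly of \cref{cor:Parallel-Haus2} and \cref{lem:3geods-line}.
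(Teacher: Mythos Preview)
Your argument is correct and essentially the same as the paper's: both lift the $\raysR$-geodesic to $X$ via nearest-point selections, apply \cref{lem:3geods-line} to see that the lift lies on a geodesic segment of $X$, and then use uniqueness of geodesics in $X$. The only difference is organisational---the paper compares an arbitrary geodesic $\Gamma$ directly against $\Sigma_{\gamma\eta}$ and leaves the non-branching step (two parallel lines through a common point coincide) implicit, whereas you compare two arbitrary geodesics and spell that step out.
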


\begin{proof}
 Let $\gamma,\eta \in \rays$. We suppose that $r:=\ds{\gamma(0),\eta(0)}=\Hds{\gamma,\eta}$.
 Let $\Gamma\colon [0,r]\to \raysR$ be a geodesic
 such that $\Gamma(0)=\gamma(\R)$ and $\Gamma(r)=\eta(\R)$. 
 Now we define a map $\xi\colon[0,r]\to X$ as follows.
 Set $\xi(0):=\gamma(0)\in\Gamma(0)$. For $u\in [0,r]$, we choose
 unique $\xi(u)\in \Gamma(u)$ such that 
 $\ds{\xi(0),\xi(u)} = \Hds{\Gamma(0),\Gamma(u)}$. 
 Then for $s,t\in [0,r]$, by
 \cref{lem:3geods-line}, we have 
 \begin{align*}
  \ds{\xi(s),\xi(t)}=\Hds{\Gamma(s),\Gamma(t)}=\abs{s-t}.
 \end{align*}
 Especially, $\ds{\gamma(0),\xi(r)}=r=\ds{\gamma(0),\eta(0)}$ and 
 $\xi(r)\in \Gamma(r)=\eta(\R)$. So $\xi(r)=\eta(0)$.
 Therefore $\xi$ is a geodesic between
 $\gamma(0)$ and $\eta(0)$. Since $X$ is a uniquely geodesic space, 
 $\xi$ is equal to the geodesic $[u\mapsto \beta(0,u)]$
 where $\beta$ is a map given by \cref{lem:strip} for $\gamma,\eta$. It follows that 
 $\Gamma$ is equal to the geodesic $\Sigma_{\gamma\eta}$.
\end{proof}

\begin{lemma}
 Let $\sigmacan$ be the canonical bicombing of $X$, and let $\Sigma$ be
 that of $\raysR$. For $\gamma_1,\gamma_2\in \rays$, and 
 $x_1\in \gamma_1(\R),\,x_2\in \gamma_2(\R)$, we have
 \begin{align*}
  \sigmacan_{x_1 x_2}(t) \in \Sigma_{\gamma_1 \gamma_2}(t) \quad (t\in [0,1]).
 \end{align*}
\end{lemma}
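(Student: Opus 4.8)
The plan is to make the description of $\Sigma$ furnished by the Strip Lemma completely explicit and then to recognise $\sigmacan_{x_1x_2}$ inside it via the uniqueness of geodesics in the Busemann space $X$. Since $\Sigma$ is the canonical bicombing of $\raysR$ (\cref{prop:raysRuniqueGoed}), the case $\Hds{\gamma_1,\gamma_2}=0$ is immediate: then $\gamma_1(\R)=\gamma_2(\R)=:L$, so $\Sigma_{\gamma_1\gamma_2}$ is the constant path at $L$, while $L$, being the image of a bi-infinite geodesic, is convex in $X$, whence $\sigmacan_{x_1x_2}(t)\in L$. So from now on assume $r:=\Hds{\gamma_1,\gamma_2}>0$.

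Again because $\raysR$ is uniquely geodesic and $\Sigma$ is its canonical bicombing, the path $\Sigma_{\gamma_1\gamma_2}\colon[0,1]\to\raysR$ depends only on the images $\gamma_1(\R),\gamma_2(\R)$, and the points $x_1,x_2\in X$ are unchanged if we reparametrise $\gamma_1,\gamma_2$. Using the first proposition of this section and \cref{cor:Parallel-Haus2} to move the nearest point of $\gamma_2(\R)$ to $\gamma_1(0)$ to parameter $0$, I may therefore assume $\ds{\gamma_1(0),\gamma_2(0)}=r$; by \cref{lem:parallel-const} this forces $\ds{\gamma_1(t),\gamma_2(t)}=r$ for all $t$, so \cref{lem:strip} applies to $\gamma=\gamma_1$, $\gamma'=\gamma_2$ and yields $\beta\colon\R\times[0,r]\to X$ with $\gamma_u:=[t\mapsto\beta(t,u)]$ and $\Sigma_{\gamma_1\gamma_2}(s)=\gamma_{rs}(\R)$. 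Write $x_1=\gamma_1(a)$ and $x_2=\gamma_2(b)$.

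For the one substantive step, apply part (2) of \cref{lem:strip} with $t_0=a$ and $\lambda=(b-a)/r$: the map $h\colon[0,r]\to X$, $h(u)=\beta\bigl(a+\tfrac{b-a}{r}u,\,u\bigr)$, is a linearly reparametrised geodesic, and $h(0)=\beta(a,0)=\gamma_1(a)=x_1$ while $h(r)=\beta(b,r)=\gamma_2(b)=x_2$. Hence $t\mapsto h(rt)$ is a linearly reparametrised geodesic $[0,1]\to X$ from $x_1$ to $x_2$, and as $X$ is uniquely geodesic it must coincide with $\sigmacan_{x_1x_2}$. Therefore
\[
 \sigmacan_{x_1x_2}(t)=\beta\bigl(a+(b-a)t,\ rt\bigr)=\gamma_{rt}\bigl(a+(b-a)t\bigr)
 \in\gamma_{rt}(\R)=\Sigma_{\gamma_1\gamma_2}(t),
\]
which is the assertion.

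The main obstacle — indeed the only point requiring genuine care — is the reduction to $\ds{\gamma_1(0),\gamma_2(0)}=r$: one must check that both $\Sigma_{\gamma_1\gamma_2}(t)$ and $\sigmacan_{x_1x_2}(t)$ are unaffected when the two geodesics are reparametrised independently, which is precisely the parametrisation-independence of the canonical bicombing on $\raysR$. Everything else is a direct unwinding of \cref{lem:strip} together with the uniqueness of geodesics in a Busemann space.
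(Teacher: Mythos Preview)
Your proof is correct and follows essentially the same route as the paper's: normalise so that $d(\gamma_1(0),\gamma_2(0))=\mathcal{H}(\gamma_1,\gamma_2)$, invoke part~(2) of the Strip Lemma to recognise $\sigma_{x_1x_2}$ as a diagonal line in the strip, and read off membership in $\Sigma_{\gamma_1\gamma_2}(t)=\gamma_{rt}(\R)$. The only differences are cosmetic---the paper additionally normalises $x_1=\gamma_1(0)$ (so your $a=0$, $b=\lambda$), and omits both the trivial $r=0$ case and the explicit reparametrisation justification that you spell out.
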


\begin{proof}
 We can assume that $x_1=\gamma_1(0)$ and
 $r:= \ds{\gamma_1(0),\gamma_2(0)} = \Hds{\gamma_1,\gamma_2}$.  Then
 $x_2=\gamma_2(\lambda)$ for some $\lambda\in \R$.  Let $\beta\colon
 \R\times [0,r]\to X$ be a map given by \cref{lem:strip} for parallel
 geodesics $\gamma_1,\gamma_2$.  Then the map $[u\mapsto
 \beta(\lambda u,ru);u\in [0,1]]$ is a unique linearly reparametrised
 geodesics between $x_1$ and $x_2$, so it coincides with $\sigmacan_{x_1 x_2}$.
 It follows that $\sigmacan_{x_1 x_2}(t) = \beta(\lambda t,rt)\in \Sigma_{\gamma_1 \gamma_2}(t)$.
\end{proof}

\begin{theorem}
 $\raysR$ is a Busemann space.
\end{theorem}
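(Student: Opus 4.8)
The plan is to verify the criterion of \cref{lem:criterion_being_Busemann}: since \cref{prop:raysRuniqueGoed} already tells us that $\raysR$ is a uniquely geodesic space with canonical bicombing $\Sigma$, it suffices to show that $\Sigma$ is conical, i.e. for all $\gamma_1,\gamma_2,\gamma_1',\gamma_2'\in\rays$ and $t\in[0,1]$,
\begin{align*}
 \Hds{\Sigma_{\gamma_1\gamma_2}(t),\Sigma_{\gamma_1'\gamma_2'}(t)}
 \leq (1-t)\Hds{\gamma_1,\gamma_1'} + t\Hds{\gamma_2,\gamma_2'}.
\end{align*}
Then \cref{lem:criterion_being_Busemann} immediately gives that $\raysR$ is a Busemann space.

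To prove the conical inequality I would transfer the problem down into $X$ using the previous lemma, which says $\sigmacan_{x_1x_2}(t)\in\Sigma_{\gamma_1\gamma_2}(t)$ whenever $x_i\in\gamma_i(\R)$. First normalize: choose parametrizations so that $\ds{\gamma_1(0),\gamma_2(0)}=\Hds{\gamma_1,\gamma_2}$ and $\ds{\gamma_1'(0),\gamma_2'(0)}=\Hds{\gamma_1',\gamma_2'}$; then by \cref{cor:Parallel-Haus2} (applied to the parallel pair $\gamma_1,\gamma_1'$, and after a shift by a constant which does not change $\Sigma_{\gamma_1'\gamma_2'}$ by the Remark) we may further arrange that $\ds{\gamma_1(0),\gamma_1'(0)}=\Hds{\gamma_1,\gamma_1'}$. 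Set $x_i=\gamma_i(0)$ and $x_i'=\gamma_i'(0)$. By the lemma, $\sigmacan_{x_1x_2}(t)$ lies on the geodesic (image) $\Sigma_{\gamma_1\gamma_2}(t)$ and $\sigmacan_{x_1'x_2'}(t)$ lies on $\Sigma_{\gamma_1'\gamma_2'}(t)$; since the Hausdorff distance between two parallel geodesics equals the distance from any point of one to the other (\cref{cor:Parallel-Haus2}), we get
\begin{align*}
 \Hds{\Sigma_{\gamma_1\gamma_2}(t),\Sigma_{\gamma_1'\gamma_2'}(t)}
 \leq \ds{\sigmacan_{x_1x_2}(t),\sigmacan_{x_1'x_2'}(t)}.
\end{align*}
Now apply the convexity (hence conicality) of the canonical bicombing $\sigmacan$ of the Busemann space $X$ to bound the right-hand side by $(1-t)\ds{x_1,x_1'}+t\ds{x_2,x_2'}$. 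With our normalization $\ds{x_1,x_1'}=\Hds{\gamma_1,\gamma_1'}$, and for the endpoint at $1$ we always have $\ds{x_2,x_2'}=\ds{\gamma_2(0),\gamma_2'(0)}\geq\Hds{\gamma_2,\gamma_2'}$ — which is the wrong direction.

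So the delicate point — and the main obstacle — is the $t=1$ endpoint: a bare choice of basepoints $\gamma_2(0),\gamma_2'(0)$ need not realize $\Hds{\gamma_2,\gamma_2'}$, and we are not free to normalize both the $(\gamma_1,\gamma_1')$-pair and the $(\gamma_2,\gamma_2')$-pair simultaneously by shifts, since shifting $\gamma_2$ relative to $\gamma_1$ changes the parametrization of the strip. I would handle this by instead using a limiting/averaging argument over shifts, or more cleanly by exploiting consistency: fix the normalization realizing $\Hds{\gamma_1,\gamma_1'}=\ds{x_1,x_1'}$, and then replace the naive conicality estimate by applying the \emph{convexity} inequality for $\sigmacan$ on the subinterval, together with \cref{lem:parallel-const} to control $\ds{\gamma_2(s),\gamma_2'(s)}$ as a constant function of the shift — choosing the shift of the second endpoints to minimize $\ds{\gamma_2(\cdot),\gamma_2'(\cdot)}$ down to $\Hds{\gamma_2,\gamma_2'}$ while using the Remark ($\Sigma_{\gamma\eta}=\Sigma_{\gamma^a\eta^a}$) to see the $\Sigma$'s are unchanged. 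Concretely, one picks $a$ with $\ds{\gamma_2(a),\gamma_2'(a)}=\Hds{\gamma_2,\gamma_2'}$ and reparametrizes the second strip by $a$; but then the first basepoints shift too. The honest fix is to run the conicality estimate with the two strips parametrized \emph{independently of each other} — which is legitimate because $\sigmacan_{x_1x_2}(t)\in\Sigma_{\gamma_1\gamma_2}(t)$ holds for \emph{every} choice of $x_1\in\gamma_1(\R)$, $x_2\in\gamma_2(\R)$ — and then optimize the right-hand side $(1-t)\ds{x_1,x_1'}+t\ds{x_2,x_2'}$ over the independent shifts of $x_1$ vs $x_1'$ and of $x_2$ vs $x_2'$; by \cref{cor:Parallel-Haus2} the infimum over the first shift is $\Hds{\gamma_1,\gamma_1'}$ and over the second is $\Hds{\gamma_2,\gamma_2'}$, and since the coefficients $(1-t),t$ are nonnegative the infimum of the sum is $(1-t)\Hds{\gamma_1,\gamma_1'}+t\Hds{\gamma_2,\gamma_2'}$. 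This yields the conical inequality, and \cref{lem:criterion_being_Busemann} finishes the proof that $\raysR$ is a Busemann space.

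I expect the only real care needed is in justifying that the infimum over the two independent shift parameters of $(1-t)\ds{\gamma_1(s),\gamma_1'(s')}+t\ds{\gamma_2(u),\gamma_2'(u')}$ separates into the two infima; this is immediate since the two terms involve disjoint sets of variables and both coefficients are $\geq 0$, and each separate infimum equals the corresponding Hausdorff distance by \cref{cor:Parallel-Haus2}. Everything else is bookkeeping with \cref{lem:strip}, the Remark, and the convexity of $\sigmacan$ on $X$.
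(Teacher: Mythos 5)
Your proposal is correct and follows essentially the same route as the paper: reduce to showing $\Sigma$ is conical via \cref{lem:criterion_being_Busemann} and \cref{prop:raysRuniqueGoed}, push down to $X$ using $\sigmacan_{x_1x_2}(t)\in\Sigma_{\gamma_1\gamma_2}(t)$, and apply the convexity of $\sigmacan$. The normalization worry you spend most of the argument on is a non-issue resolved exactly by your final observation, which is what the paper does from the start: since the inclusion lemma holds for \emph{arbitrary} $x_i\in\gamma_i(\R)$, one simply chooses $x_1,x_1'$ realizing $\Hds{\gamma_1,\gamma_1'}$ and $x_2,x_2'$ realizing $\Hds{\gamma_2,\gamma_2'}$ independently (such realizing points exist by \cref{cor:Parallel-Haus2}), so no infimizing over shifts is needed.
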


\begin{proof}
 We denote by $\Sigma$ the canonical bicombing of $\raysR$.  By
 \cref{prop:raysRuniqueGoed}, $\raysR$ is a uniquely geodesic space.  So by
 \cref{lem:criterion_being_Busemann}, it is enough to show that the
 canonical bicombing $\Sigma$ is conical.

 Let $\gamma_1,\gamma_2,\eta_1,\eta_2\in \rays$. 
 We choose $x_i\in \gamma_i(\R),\,y_i\in \eta_i(\R)$ for $i=1,2$ such that
 \begin{align*}
  \ds{x_1,y_1} = \Hds{\gamma_1,\eta_1}, \quad
  \ds{x_2,y_2} = \Hds{\gamma_2,\eta_2}.
 \end{align*}
 Let $\sigmacan$ be the canonical bicombing of $X$. For $t\in [0,1]$, we have
 \begin{align*}
  \sigmacan_{x_1 x_2}(t)\in \Sigma_{\gamma_1 \gamma_2}(t), \quad
  \sigmacan_{y_1 y_2}(t)\in \Sigma_{\eta_1 \eta_2}(t).
 \end{align*}
 Thus we have
 \begin{align*}
  \Hds{\Sigma_{\gamma_1 \gamma_2}(t),\Sigma_{\eta_1 \eta_2}(t)}
  &\leq  \ds{\sigmacan_{x_1 x_2}(t),\sigmacan_{y_1 y_2}(t)}\\
  &\leq (1-t)\ds{x_1,y_1} + t\ds{x_2,y_2} \\
  &=  (1-t)\Hds{\gamma_1,\eta_1} + t\Hds{\gamma_2,\eta_2}.
 \end{align*}
 Therefore $\Sigma$ is conical and this completes the proof.
\end{proof}

\section{Busemann function and topological splitting}
\label{sec:busem-funct-topol}
First we introduce the Busemann function.
\begin{definition}
 Let $X$ be a metric space
 and $\eta\colon \R\to X$ be an geodesic.
 We define a function 
 $b_\eta\colon X\to \R$ by
 \begin{align*}
  b_\eta(x)= \lim_{t\to \infty}(\ds{x,\eta(t)}-t),
 \end{align*}
 and call it the Busemann function associated to $\eta$.
\end{definition}

\begin{remark}
 Since $\ds{x,\eta(t)}-t$ is non-increasing on $t$ and bounded below, the limit exists,
 and $b_\eta$ is a 1-Lipschitz function.
\end{remark}

\begin{lemma}
\label{lem:Busemann-convex}
 Let $X$ be a Busemann space and
 $\eta\colon \R\to X$ be a geodesic.
 For a geodesic $\xi\colon \R\to X$, the composite
 $b_\eta\circ \xi\colon \R\to \R$ is convex. Moreover, 
 if 
 $\xi$ and $\eta$ are parallel,
 it is strictly decreasing.
\end{lemma}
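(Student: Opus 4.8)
The plan is to treat the two assertions separately, establishing convexity first and then exploiting it, together with the hypothesis, to get strict monotonicity in the parallel case.

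For convexity, I would write $b_\eta$ as the pointwise limit of the \emph{decreasing} sequence of functions $d_n:=d(\cdot,\eta(n))-n$: since $t\mapsto d(x,\eta(t))-t$ is non-increasing for each fixed $x$, the sequence $(d_n(x))_n$ is non-increasing and $b_\eta(x)=\inf_n d_n(x)=\lim_n d_n(x)$. A pointwise limit of a decreasing sequence of convex functions is convex, so it suffices to show that $s\mapsto d(\xi(s),p)$ is convex for an arbitrary geodesic $\xi$ and an arbitrary fixed $p\in X$; then each $d_n\circ\xi$ is convex and hence so is $b_\eta\circ\xi$. This reduction is the reason the hypothesis ``$X$ Busemann'' enters only through this one sub-lemma.

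The core step is thus: in a Busemann space, $s\mapsto d(\xi(s),p)$ is convex along every geodesic $\xi$. I would prove midpoint convexity and then invoke continuity. Given $a<b$, put $A=\xi(a)$, $B=\xi(b)$, and let $M=\xi\big((a+b)/2\big)$, the midpoint of $A$ and $B$. Let $c\colon[0,1]\to X$ be the linearly reparametrised geodesic from $B$ to $A$ (so $c(1/2)=M$), and let $e\colon[0,1]\to X$ be the linearly reparametrised geodesic from $B$ to $p$; both start at $B$. The defining convexity of a Busemann space applied to $c$ and $e$ gives that $\tau\mapsto d(c(\tau),e(\tau))$ is convex, and it vanishes at $\tau=0$, so $d(M,e(1/2))\le\tfrac{1}{2}d(A,p)$; since also $d(e(1/2),p)=\tfrac{1}{2}d(B,p)$, the triangle inequality yields $d(M,p)\le\tfrac{1}{2}d(A,p)+\tfrac{1}{2}d(B,p)$, i.e.\ midpoint convexity of $s\mapsto d(\xi(s),p)$. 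As this function is $1$-Lipschitz, midpoint convexity upgrades to convexity.

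For the strictly decreasing part, first note $b_\eta(\eta(s))=\lim_{t\to\infty}\big((t-s)-t\big)=-s$. If $\xi$ is parallel to $\eta$, then $D:=\sup_t d(\xi(t),\eta(t))<\infty$, and since $b_\eta$ is $1$-Lipschitz we get $b_\eta(\xi(t))\le b_\eta(\eta(t))+D=-t+D\to-\infty$ as $t\to\infty$. A convex function $f\colon\R\to\R$ with $f(t)\to-\infty$ as $t\to+\infty$ must be strictly decreasing: if $f(a)\le f(b)$ for some $a<b$, the non-decreasing slopes of a convex function force $f(x)\ge f(b)$ for all $x\ge b$, contradicting $f(t)\to-\infty$. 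Applying this to $f=b_\eta\circ\xi$ finishes the proof.

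I expect the only real obstacle to be the sub-lemma on convexity of $d(\cdot,p)$ along geodesics. The tempting one-line argument ``apply the convexity of the bicombing to $x=y=p$'' is not literally available, since a constant map is not a linearly reparametrised geodesic under the conventions fixed earlier; routing instead through the two genuine geodesics $c$ and $e$ emanating from the common point $B$, plus one triangle inequality, circumvents this. Everything else is routine: stability of convexity under decreasing pointwise limits, the computation $b_\eta\circ\eta(s)=-s$, and an elementary fact about convex functions of one variable.
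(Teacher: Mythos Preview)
Your proof is correct and follows the same approach as the paper: express $b_\eta\circ\xi$ as a pointwise limit of the convex functions $s\mapsto d(\xi(s),\eta(t))-t$, then use parallelism to force $b_\eta(\xi(s))\to-\infty$ and deduce strict monotonicity from convexity. You are more careful on one point the paper glosses over---namely that a constant map is not a linearly reparametrised geodesic under the stated conventions, so the convexity of $s\mapsto d(\xi(s),p)$ requires the two-geodesic workaround you supply---but this is an elaboration of the same argument rather than a different route.
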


\begin{proof}
 Since the canonical bicombing is convex, the function 
 $s\mapsto \ds{x,\xi(t)}$ is convex,
 and so are its translates. Since the Busemann function $b_\eta$ 
 is the pointwise limit of the function 
 $[s\mapsto \ds{\xi(s),\eta(t)}-t]$, it is convex. 

 Suppose $\sup_{t\in \R}\ds{\eta(t),\xi(t)}<\infty$. Then 
 \begin{align*}
  \lim_{s\to \infty}b_\eta(\xi(s))=-\infty,\quad
  \lim_{s\to -\infty}b_\eta(\xi(s))=\infty,
 \end{align*}
 it follows that $b_\eta\circ\xi$ is strictly 
 decreasing.
\end{proof}


Let $X$ be a Busemann space and
$\basegeod\colon \R\to X$ be a geodesic. 
We define a subset $\raysb$ of $\rays$ and
a subset $\raysbEv{t}$ of $X$ where $t\in \R$ as follows:
\begin{align*}
 \raysb:=& 
 \left\{\gamma\in \rays: b_\basegeod(\gamma(0))=0\right\},\\
 \raysbEv{t}:=& \{\gamma(t):\gamma\in \raysb\}.
\end{align*}
There exists a bijection between $\raysb$ and $\raysR$. Therefore we
identify $\raysb$ with $\raysR$, and we regard $(\raysb,\mathcal{H})$ as a metric
space.

\begin{lemma}
\label{lem:proj_conti}
 For $t\in \R$, define a map $\eval_t\colon \raysb\to \raysbEv{t}$ by 
 $\eval_t(\gamma):=\gamma(t)$. Then $\eval_t$ is continuous.
\end{lemma}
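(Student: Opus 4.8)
The plan is to show that $\eval_t$ is in fact Lipschitz with constant $1$. Fix $t\in\R$ and take $\gamma,\delta\in\raysb$. We want to bound $\ds{\gamma(t),\delta(t)}$ in terms of $\Hds{\gamma,\delta}$. First I would reduce to a convenient parametrization: by the Strip Lemma (\cref{lem:strip}) together with \cref{lem:flatstrip-geod}, there is a flat-strip type family $\{\gamma_u\}_{u\in[0,r]}$ of pairwise parallel geodesics interpolating between $\gamma$ and a reparametrized $\delta$, where $r=\Hds{\gamma,\delta}$. The point is that for the geodesic from $\gamma(0)$ to the nearest point of $\delta(\R)$, each $\gamma_u$ passes through a point at distance $u$ from $\gamma(0)$, and by \cref{lem:parallel-const} the ``transverse'' distance $\ds{\gamma_u(s),\gamma_v(s)}$ is independent of $s$ and equals $\abs{u-v}\le r$ for a suitable synchronization of parameters. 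Thus $\ds{\gamma(s),\tilde\delta(s)}\le r$ for all $s$, where $\tilde\delta$ is $\delta$ suitably reparametrized.

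The remaining issue is that this reparametrization $\tilde\delta(s)=\delta(s+c)$ introduces a shift $c$, and we need to control $\abs{c}$ using the normalization $b_\basegeod(\gamma(0))=b_\basegeod(\delta(0))=0$. Here I would use \cref{lem:Busemann-convex}: since $\basegeod$ is parallel to both $\gamma$ and $\delta$, the functions $b_\basegeod\circ\gamma$ and $b_\basegeod\circ\delta$ are strictly decreasing, and in fact one checks they decrease at unit rate along these parallel geodesics, i.e. $b_\basegeod(\gamma(s))=-s$ and similarly for $\delta$ (this follows from \cref{lem:parallel-const} applied to $\basegeod$ and the geodesic, giving $b_\basegeod(\gamma(s)) = b_\basegeod(\gamma(0)) - s = -s$). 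Now $\abs{b_\basegeod(\gamma(0)) - b_\basegeod(\tilde\delta(0))} = \abs{0 - (-c)} = \abs{c}$, while since $b_\basegeod$ is $1$-Lipschitz, $\abs{b_\basegeod(\gamma(0)) - b_\basegeod(\tilde\delta(0))}\le \ds{\gamma(0),\tilde\delta(0)}\le r$. Hence $\abs{c}\le r = \Hds{\gamma,\delta}$.

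Putting it together: for any $t$,
\begin{align*}
 \ds{\gamma(t),\delta(t)}
 &\le \ds{\gamma(t),\delta(t+c)} + \ds{\delta(t+c),\delta(t)}\\
 &= \ds{\gamma(t),\tilde\delta(t)} + \abs{c}
 \le r + r = 2\,\Hds{\gamma,\delta},
\end{align*}
so $\eval_t$ is $2$-Lipschitz, hence continuous. (With more care in synchronizing the strip parametrization one can likely get the constant $1$, but $2$ suffices for continuity.)

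The main obstacle I anticipate is the bookkeeping in the reduction step: the Strip Lemma is stated for geodesics satisfying $\ds{\gamma(s),\gamma'(s)}=r$ for all $s$, i.e. already synchronized, whereas a general parallel pair $\gamma,\delta$ must first be reparametrized so that $\gamma(0)$ realizes the distance to $\delta(\R)$ and the parametrizations are aligned; tracking how much shift this costs, and confirming it is absorbed by the Busemann normalization as above, is the delicate part. Everything else is triangle inequality and invocation of \cref{lem:parallel-const}, \cref{lem:flatstrip-geod}, and \cref{lem:Busemann-convex}.
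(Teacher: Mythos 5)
Your argument is correct, and it takes a genuinely different route from the paper. The paper proves continuity of $\eval_0$ by a local $\epsilon$--$\delta$ argument: it uses only the strict monotonicity of $b_\basegeod$ along geodesics parallel to $\basegeod$ (\cref{lem:Busemann-convex}) to show that a line $\gamma'\in\raysb$ Hausdorff-close to $\gamma$ cannot have its basepoint $\gamma'(0)$ land far along $\gamma$, since the Busemann value there would be too large in absolute value. You instead prove a global quantitative statement: writing $r=\Hds{\gamma,\delta}$ and choosing $c$ with $\ds{\gamma(0),\delta(c)}=r$ (possible by \cref{cor:Parallel-Haus2}), \cref{lem:parallel-const} gives $\ds{\gamma(t),\delta(t+c)}=r$ for all $t$, and the exact translation formula $b_\basegeod(\delta(s))=-s$ (your direct computation, which is precisely \cref{lem:BfuncTransl}, proved later in the paper from \cref{lem:parallel-const} alone, so there is no circularity) together with the $1$-Lipschitz property of $b_\basegeod$ yields $\abs{c}\leq r$; the triangle inequality then gives $\ds{\gamma(t),\delta(t)}\leq 2\,\Hds{\gamma,\delta}$, i.e.\ $\eval_t$ is $2$-Lipschitz, uniformly in $t$. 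Note that the Strip Lemma and \cref{lem:flatstrip-geod} are not really needed for this; \cref{lem:parallel-const} and \cref{cor:Parallel-Haus2} suffice. Your approach buys a stronger, uniform (Lipschitz) conclusion and in fact would streamline the continuity half of \cref{thm:topdecomp}; the paper's argument is softer and avoids the nearest-point/reparametrization bookkeeping. One small caveat: your parenthetical hope of improving the constant to $1$ is not attainable in general --- already in a non-Euclidean normed plane the horosphere $b_\basegeod^{-1}(0)$ need not meet the parallel family ``orthogonally,'' so the optimal constant can exceed $1$ --- but this does not affect your proof, since any finite constant gives continuity.
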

\begin{proof}
 It is enough to show that $\eval_0$ is continuous. Indeed, for
 $\gamma,\gamma'\in \raysb$, by \cref{lem:parallel-const}, we have
 $\ds{\gamma(t),\gamma'(t)} = \ds{\gamma(0),\gamma'(0)} $. Therefore, if
 $\eval_0$ is continuous, so is $\eval_t$ for all $t\in \R$.

 Let $\gamma\in \raysb$. We fix $\epsilon>0$. Set 
 $\delta':=\min\{\abs{b_\basegeod(\gamma(\epsilon/2))},\, b_\basegeod(\gamma(-\epsilon/2))\}$. By Lemma~\ref{lem:Busemann-convex}, we have
 $\delta'>0$. Set $\delta:=\min\{\delta'/2, \epsilon/2\}$. 
 Here we use Lemma~\ref{lem:Busemann-convex} again, we have
 \begin{align}
  \label{eq:1}
  \abs{b_\basegeod(\gamma(t))} > \delta' \quad \text{for all } t\in \R \text{ with } \abs{t}>\epsilon/2.
 \end{align}

 Now let $\gamma'\in \raysb$ such that
 $\Hds{\gamma,\gamma'}<\delta$. There exists $t\in \R$ such that 
 $\ds{\gamma(t),\gamma'(0)}<\delta$.
 We suppose that $\ds{\gamma(0),\gamma'(0)}>\epsilon$.
 Then we have
 \begin{align*}
  \abs{t} = \ds{\gamma(0),\gamma(t)} \geq \ds{\gamma(0),\gamma'(0)} - \ds{\gamma'(0),\gamma(t)}> \epsilon - \delta\geq \epsilon/2.
 \end{align*}
 Since $b_\basegeod$ is 1-Lipschitz, using (\ref{eq:1}), we have 
 \begin{align*}
  \abs{b_\basegeod(\gamma'(0))}>\abs{b_\basegeod(\gamma(t))} -\delta\geq \delta'/2>0.
 \end{align*}
 This contradicts that 
 $\gamma'\in \raysb$. Thus we have $\ds{\gamma(0),\gamma'(0)}\leq \epsilon$. 
 Therefore $\eval_0$ is continuous.
\end{proof}


\begin{theorem}
\label{thm:topdecomp}
 The map
 \begin{align*}
  \eval\colon \raysb \times \R \to \raysU,
  \quad \eval(\gamma,t) = \gamma(t)
 \end{align*}
 is homeomorphism.
\end{theorem}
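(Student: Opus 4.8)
The plan is to check directly that $\eval$ is a continuous bijection whose inverse is continuous. The whole argument rests on one rigidity property of the Busemann function: for every $\gamma\in\rays$ we have $b_\basegeod(\gamma(s))=b_\basegeod(\gamma(0))-s$ for all $s\in\R$. To see this, note that $\gamma$ is parallel to $\basegeod$, so by \cref{lem:parallel-const} the function $u\mapsto\ds{\gamma(u),\basegeod(u+C)}$ is constant and equal to $\ds{\gamma(0),\basegeod(C)}$ for each fixed $C$; taking $u=s$ and $C=t-s$ gives $\ds{\gamma(s),\basegeod(t)}=\ds{\gamma(0),\basegeod(t-s)}$, and substituting this into the definition of $b_\basegeod$ and letting $t\to\infty$ yields the claim. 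In particular, for $\gamma\in\raysb$ we get $b_\basegeod(\gamma(s))=-s$, so $b_\basegeod$ restricts to an affine bijection of $\R$ (with slope $-1$) along each geodesic of $\raysb$.

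Granting this, I would first prove bijectivity. For surjectivity: given $x\in\raysU$ choose $\gamma'\in\rays$ with $x\in\gamma'(\R)$; putting $a:=b_\basegeod(\gamma'(0))$, the reparametrisation $\gamma:=[s\mapsto\gamma'(s+a)]$ lies in $\raysb$ by the rigidity property, and $x=\gamma(t)=\eval(\gamma,t)$ for a suitable $t$. For injectivity: if $\eval(\gamma_1,t_1)=\eval(\gamma_2,t_2)=x$ with $\gamma_i\in\raysb$, then $-t_i=b_\basegeod(\gamma_i(t_i))=b_\basegeod(x)$ forces $t_1=t_2=:t$; and since $\gamma_1,\gamma_2$ are parallel, \cref{lem:parallel-const} with $C=0$ gives $\ds{\gamma_1(s),\gamma_2(s)}=\ds{\gamma_1(t),\gamma_2(t)}=0$ for all $s$, so $\gamma_1=\gamma_2$. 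Thus $\eval^{-1}$ is the well-defined map $x\mapsto(\gamma_x,-b_\basegeod(x))$, where $\gamma_x$ denotes the unique element of $\raysb$ whose image contains $x$.

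Next I would establish continuity of both maps. For $\eval$, I would estimate
\[
 \ds{\gamma'(t'),\gamma(t)}\le\abs{t'-t}+\ds{\gamma'(t),\gamma(t)}=\abs{t'-t}+\ds{\eval_t(\gamma'),\eval_t(\gamma)},
\]
and observe that both terms on the right go to $0$ as $(\gamma',t')\to(\gamma,t)$, the second by the continuity of $\eval_t$ established in \cref{lem:proj_conti}. For $\eval^{-1}$, the coordinate $x\mapsto-b_\basegeod(x)$ is $1$-Lipschitz because $b_\basegeod$ is, so the only remaining point is the continuity of $x\mapsto\gamma_x\in(\raysb,\mathcal H)$. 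Here I would invoke \cref{cor:Parallel-Haus2}: for $x,y\in\raysU$ the geodesics $\gamma_x,\gamma_y$ are parallel and $x\in\gamma_x(\R)$, hence
\[
 \Hds{\gamma_x,\gamma_y}=\ds{x,\gamma_y(\R)}\le\ds{x,y}
\]
since $y\in\gamma_y(\R)$. So $x\mapsto\gamma_x$ is actually $1$-Lipschitz, $\eval^{-1}$ is continuous, and $\eval$ is a homeomorphism; together with the identification $\raysb\cong\raysR$ this also gives (\ref{item:decomposition}) of \cref{thm:main}.

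I do not expect a genuine obstacle here: the substantive work is already contained in \cref{lem:parallel-const}, \cref{lem:proj_conti} and \cref{cor:Parallel-Haus2}. The one point that must be handled carefully is that the affine rigidity of $b_\basegeod$ along parallel geodesics is precisely what removes the reparametrisation ambiguity and makes $\eval^{-1}$ a well-defined single-valued map, while \cref{cor:Parallel-Haus2} is precisely what controls the Hausdorff distance between the geodesics through two nearby points; once these are in place no further estimate is needed.
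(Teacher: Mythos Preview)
Your proof is correct and follows essentially the same route as the paper: continuity of $\eval$ via \cref{lem:proj_conti}, and continuity of the first component of $\eval^{-1}$ via the $1$-Lipschitz bound from \cref{cor:Parallel-Haus2}. The one difference is in the $\R$-coordinate of the inverse: the paper writes it as $\theta(x)\,\ds{x,q(x)(0)}$ with a sign function $\theta$, whereas you use $-b_\basegeod(x)$ directly, appealing to the affine rigidity $b_\basegeod(\gamma(s))=-s$ for $\gamma\in\raysb$ (which is the paper's \cref{lem:BfuncTransl}, proved just after the theorem). Your formulation is a bit cleaner, since the $1$-Lipschitz property of $b_\basegeod$ is immediate and you avoid checking continuity of the signed distance at the switching locus.
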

\begin{proof}
 First we show that $\eval$ is continuous. By \cref{lem:proj_conti}, 
 for $(\gamma,t)\in \raysb \times \R$ and $\epsilon>0$, 
 there exists $\delta>0$ such that for all $\gamma'\in \raysb$ 
 with $\Hds{\gamma,\gamma'}<\delta$, we have 
 $\ds{\eval_t(\gamma),\eval_t(\gamma')}=\ds{\gamma(t),\gamma'(t)}<\epsilon/2$. 
 Then for $(\gamma',t')\in \raysb\times \R$ 
 with $\Hds{\gamma,\gamma'}<\delta$ and $\abs{t-t'}< \epsilon/2$, we have
 \begin{align*}
  \ds{\eval(\gamma,t),\eval(\gamma',t')} = \ds{\gamma(t),\gamma'(t')} \leq 
  \ds{\gamma(t),\gamma'(t)} + \ds{\gamma'(t),\gamma'(t')} <\epsilon.
 \end{align*}
 Therefore $\eval$ is continuous. 

 Now we will see that the inverse is also continuous. 
 We define a map $q\colon \raysU\to \raysb$ such that
 for $x\in \raysU$, we have $x\in q(x)(\R)$. 
 Since for $x\in \raysU$ there exists unique $\gamma\in \raysb$ with $x\in \gamma(\R)$,
 this map is well-defined. 
 For $x,x'\in \raysR$, 
 by \cref{cor:Parallel-Haus2}, we have $\Hds{q(x),q(x')}\leq \ds{x,x'}$. 
 So the map $q$ is continuous. We also define a map $\theta\colon \raysU\to \{1,-1\}$ by
 \begin{align*}
  \theta(x) = \begin{cases}
	       1 & \text{if } x\in q(x)([0,\infty))\\
	       -1 & \text{else}
	      \end{cases}
 \end{align*}
 Then the map $x\mapsto \theta(x)\ds{x,q(x)(0)}$ is continuous. 
 Since the inverse $\eval^{-1}$ is given by 
 $\eval^{-1}(x)=(q(x),\theta(x)\ds{x,q(x)(0)})$, it is continuous.
\end{proof}

\begin{corollary}
 The map $\eval_t$ defined in \cref{lem:proj_conti}
 is a homeomorphism.
\end{corollary}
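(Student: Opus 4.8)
The plan is to read off this statement from \cref{thm:topdecomp}: the map $\eval_t$ is precisely the restriction of the homeomorphism $\eval\colon\raysb\times\R\to\raysU$ to the slice $\raysb\times\{t\}$, reparametrised via the evident identification $\raysb\cong\raysb\times\{t\}$.

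Fix $t\in\R$. Continuity of $\eval_t$ has already been proved in \cref{lem:proj_conti}, so I would only need bijectivity together with continuity of the inverse. Surjectivity of $\eval_t$ onto $\raysbEv{t}$ is immediate from the definition $\raysbEv{t}=\{\gamma(t):\gamma\in\raysb\}$; injectivity follows since $\eval(\gamma,t)=\eval(\gamma',t)$ forces $(\gamma,t)=(\gamma',t)$ by injectivity of $\eval$, hence $\gamma=\gamma'$. For the inverse I would invoke the continuous map $q\colon\raysU\to\raysb$ constructed in the proof of \cref{thm:topdecomp}, which is characterised by $x\in q(x)(\R)$: if $x=\gamma(t)$ with $\gamma\in\raysb$, then uniqueness of the parallel geodesic in $\raysb$ passing through $x$ gives $q(x)=\gamma$, so $\eval_t^{-1}=q|_{\raysbEv{t}}$, which is continuous. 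Equivalently, a homeomorphism restricts to a homeomorphism onto the image of any subspace, applied here to $\raysb\times\{t\}\subset\raysb\times\R$, and precomposing with the canonical homeomorphism $\raysb\xrightarrow{\ \sim\ }\raysb\times\{t\}$ yields $\eval_t$.

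There is no real obstacle in this argument; the only bookkeeping point to keep straight is that $\raysbEv{t}$ should be understood with the subspace topology induced from $X$ (equivalently from $\raysU$), which is exactly the topology making the ``restriction of a homeomorphism'' reasoning apply verbatim.
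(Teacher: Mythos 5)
Your proposal is correct and matches the paper's (implicit) argument: the corollary is stated as an immediate consequence of \cref{thm:topdecomp}, with $\eval_t$ being the restriction of the homeomorphism $\eval$ to the slice $\raysb\times\{t\}$ and its inverse the restriction of the continuous map $q$. Nothing further is needed.
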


Using the fact from the dimension theory \cite[Theorem 5.11.]{Morita-Cech-cohom}, 
we have the following.
\begin{corollary}
\label{cor:dim}
 If the topological dimension of $X$ is finite, we have
 \begin{align*}
  \dim\raysR \leq  \dim X -1.
 \end{align*}
\end{corollary}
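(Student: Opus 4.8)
The plan is to read the dimension bound off directly from the topological splitting $\raysU\cong\raysb\times\R\cong\raysR\times\R$ provided by \cref{thm:topdecomp}, combined with two standard inputs from dimension theory: monotonicity of the covering dimension under passage to subspaces of a metric space, and the product formula $\dim(Y\times\R)=\dim Y+1$ for a finite-dimensional metrizable space $Y$, which is the content of the cited \cite[Theorem 5.11.]{Morita-Cech-cohom}. So the work is essentially bookkeeping on top of \cref{thm:topdecomp}.

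First I would note that $\raysU$ is a subspace of the metric space $X$, hence $\dim\raysU\le\dim X$, which is finite by hypothesis. By \cref{thm:topdecomp} the map $\eval\colon\raysb\times\R\to\raysU$ is a homeomorphism, and since $\raysb$ is identified with $\raysR$ this gives $\dim(\raysR\times\R)=\dim\raysU\le\dim X<\infty$. In particular $\raysR$ is itself finite-dimensional: it embeds into the finite-dimensional space $\raysR\times\R$ as $\raysR\times\{0\}$, or, alternatively, $\eval_0$ identifies it with the subspace $\raysbEv{0}$ of $X$.

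Next I would apply the product formula to $\raysR$, which is metrizable, since it carries the Hausdorff metric, and finite-dimensional by the previous step; this yields $\dim(\raysR\times\R)=\dim\raysR+1$. Combining this with $\dim(\raysR\times\R)=\dim\raysU\le\dim X$ gives $\dim\raysR+1\le\dim X$, that is, $\dim\raysR\le\dim X-1$, as claimed.

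I do not expect a genuine obstacle here; the only point needing care is invoking the dimension-theoretic inputs with the right hypotheses. One must know in advance that $\dim\raysR<\infty$ before appealing to the equality $\dim(\raysR\times\R)=\dim\raysR+1$, and this finiteness is exactly what the subspace estimate $\dim\raysR\le\dim\raysU\le\dim X$ supplies. All of the substantive content — the homeomorphism $\raysU\cong\raysR\times\R$ — has already been established in \cref{thm:topdecomp}.
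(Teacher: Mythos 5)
Your argument is correct and is exactly the intended one: the paper's (unwritten) proof likewise combines the homeomorphism $\raysU\cong\raysb\times\R$ from \cref{thm:topdecomp} with subspace monotonicity $\dim\raysU\le\dim X$ and the product formula $\dim(\raysR\times\R)=\dim\raysR+1$ supplied by the cited theorem of Morita. Your extra care in first securing $\dim\raysR<\infty$ before invoking the product formula is a reasonable refinement, not a deviation.
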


\begin{lemma}
\label{lem:BfuncTransl}
 Let $\gamma$ be a geodesic which is parallel to $\basegeod$. 
 For any $u,s\in \R$,
 \begin{align*}
  b_\basegeod(\gamma(u+s)) = b_\basegeod(\gamma(u))-s.
 \end{align*}
\end{lemma}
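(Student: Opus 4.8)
The statement to prove is Lemma~\ref{lem:BfuncTransl}: for a geodesic $\gamma$ parallel to $\basegeod$ and any $u,s\in\R$, we have $b_\basegeod(\gamma(u+s)) = b_\basegeod(\gamma(u))-s$. The plan is to fix $u$ and regard both sides as functions of $s$, then exploit the convexity and asymptotic behaviour of $b_\basegeod\circ\gamma$ established in \cref{lem:Busemann-convex}. First I would note that it suffices to prove the identity for $u=0$, i.e.\ $b_\basegeod(\gamma(s)) = b_\basegeod(\gamma(0)) - s$; the general case follows by replacing $\gamma$ with the reparametrised geodesic $[t\mapsto\gamma(t+u)]$, which is still parallel to $\basegeod$, so the reduced identity applied to it gives $b_\basegeod(\gamma(u+s)) = b_\basegeod(\gamma(u)) - s$ directly.

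So set $f(s) := b_\basegeod(\gamma(s))$. By \cref{lem:Busemann-convex}, $f$ is convex, and since $\gamma$ is parallel to $\basegeod$ it is moreover strictly decreasing with $\lim_{s\to\infty}f(s) = -\infty$ and $\lim_{s\to-\infty}f(s)=+\infty$. The key additional input is that $b_\basegeod$ is $1$-Lipschitz (noted in the remark after the definition of the Busemann function), and $\gamma$ is unit-speed, so $\abs{f(s)-f(s')}\le\abs{s-s'}$ for all $s,s'$. Combining this Lipschitz bound with convexity forces the slope of $f$ to be controlled: the difference quotient $\tfrac{f(s)-f(s')}{s-s'}$ is non-decreasing in each variable (convexity) and lies in $[-1,1]$ (Lipschitz), hence has a limit as $s\to+\infty$ and as $s\to-\infty$. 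The strict monotonicity and the fact that $f\to\mp\infty$ at $\pm\infty$ pin the asymptotic slope at $+\infty$ to exactly $-1$: if it were some $c>-1$, then $f(s)$ would be bounded below by an affine function of slope $c$, contradicting $f(s)\to-\infty$ being forced to happen — wait, that alone is not a contradiction, so instead I would argue directly that for $s\ge s'$, $f(s') - f(s) \le s - s'$ (Lipschitz) and convexity gives that this difference quotient increases; the cleanest route is to show $f(s) = f(0) - s$ by checking both inequalities.

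For the upper bound $f(s)\le f(0)-s$ when $s\ge 0$: since $b_\basegeod$ is the decreasing limit of $x\mapsto d(x,\basegeod(t))-t$, and along the geodesic $\gamma$ one has $d(\gamma(s),\basegeod(t)) \le d(\gamma(s),\gamma'(s)) + \cdots$ — actually the transparent argument uses that $\gamma$ parallel to $\basegeod$ means $\gamma$ and $\basegeod$ (suitably translated) stay at bounded distance, and one computes $b_\basegeod(\gamma(s))$ by estimating $d(\gamma(s),\basegeod(t))$ for large $t$: by the triangle inequality $d(\gamma(s),\basegeod(t)) \le d(\gamma(s),\gamma(t)) + d(\gamma(t),\basegeod(t)) = (t-s) + O(1)$, giving $f(s)\le -s + O(1)$, and symmetrically $d(\gamma(s),\basegeod(t))\ge d(\gamma(0),\basegeod(t)) - d(\gamma(0),\gamma(s)) = d(\gamma(0),\basegeod(t)) - s$, giving $f(s)\ge f(0) - s$. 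These two bounds do not immediately coincide, but feeding them back through convexity does the job: $f(s) + s$ is bounded above (first bound) and $f(s) - (f(0)-s) \ge 0$ is bounded; since $s\mapsto f(s)+s$ is convex and bounded, hmm. The genuinely clean finish: $g(s) := f(s) + s$ is convex (sum of convex and affine), $1$-Lipschitz's failure — actually $g$ satisfies $\abs{g(s)-g(s')}\le 2\abs{s-s'}$, not obviously bounded.

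I think the main obstacle is precisely this: upgrading "asymptotically slope $-1$" to "exactly affine of slope $-1$ everywhere," and I expect the right tool is \cref{lem:parallel-const} — since for any constant $C$, $d(\gamma(t),\basegeod(t+C))$ is constant, one can write $b_\basegeod(\gamma(s))$ exactly as a limit that telescopes. Concretely, $b_\basegeod(\gamma(s)) = \lim_{t\to\infty}(d(\gamma(s),\basegeod(t)) - t)$; reparametrising, compare with $b_\basegeod(\gamma(0))$ via the parallel geodesic $[t\mapsto\gamma(t+s)]$, which is parallel to $\basegeod$ with $d(\gamma(t+s),\basegeod(t))$ constant in $t$, call it $r_s$. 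Then $d(\gamma(s),\basegeod(t)) - t = d(\gamma(s),\basegeod(t)) - t$ and one uses that $\gamma(s)$ lies on the geodesic $[\tau\mapsto\gamma(\tau+s)]$ at parameter $0$... Ultimately I would set up the computation $b_\basegeod(\gamma(u+s)) - b_\basegeod(\gamma(u)) = \lim_{t\to\infty}\big(d(\gamma(u+s),\basegeod(t)) - d(\gamma(u),\basegeod(t))\big)$ and evaluate this difference using that the restriction of $b_\basegeod$ to the image of $\gamma$ is, by \cref{lem:Busemann-convex} strictly decreasing and convex, hence differentiable a.e., and combine with the $1$-Lipschitz bound to conclude the derivative is identically $-1$; since the identity is then established for a dense set of $(u,s)$ it holds for all by continuity of both sides. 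The hard part is making this last density/regularity argument airtight without circularity, and I expect the author instead gives the short telescoping argument via \cref{lem:parallel-const} that I gestured at above.
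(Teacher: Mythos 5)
Your proposal does not actually close any of the three routes it opens, so as it stands there is a genuine gap. The route you correctly guess to be the author's --- the telescoping computation via \cref{lem:parallel-const} --- is only gestured at and trails off; the one missing step is the exact identity $\ds{\gamma(u+s),\basegeod(t)} = \ds{\gamma(u),\basegeod(t-s)}$, obtained by applying \cref{lem:parallel-const} to the constant function $\tau\mapsto \ds{\gamma(\tau),\basegeod(\tau+C)}$ with $C=t-u-s$ and evaluating at $\tau=u+s$ and $\tau=u$. With it one writes $\ds{\gamma(u+s),\basegeod(t)}-t = \bigl(\ds{\gamma(u),\basegeod(t-s)}-(t-s)\bigr)-s$ and lets $t\to\infty$; that three-line computation is precisely the paper's proof. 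Your fallback (a.e.\ differentiability plus ``convex, $1$-Lipschitz, strictly decreasing, tends to $\mp\infty$'') cannot work on its own: the function equal to $-s$ for $s\le 0$ and $-s/2$ for $s>0$ has all of those properties but is not affine of slope $-1$, so pinning the derivative at $-1$ necessarily requires the quantitative input from parallelism, which that sketch never feeds in.

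Ironically, your first route was one observation away from a complete (and different) proof. You had already shown, from the triangle inequality and parallelism, that $f(s):=b_\basegeod(\gamma(s))$ satisfies $f(s)\le -s + C$ with $C=\sup_{t}\ds{\gamma(t),\basegeod(t)}<\infty$; that says the convex function $g(s):=f(s)+s$ (convex by \cref{lem:Busemann-convex} plus an affine term) is bounded \emph{above} on all of $\R$. A convex function on $\R$ that is bounded above is constant, so $g\equiv\mathrm{const}$, i.e.\ $b_\basegeod(\gamma(u+s))=b_\basegeod(\gamma(u))-s$ for all $u,s$. You abandoned this at ``not obviously bounded'': two-sided boundedness (and the Lipschitz constant of $g$) is irrelevant --- boundedness above suffices, and you had it. Either finish would make the argument complete; without one of them, the proposal is an exploration of ingredients rather than a proof.
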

\begin{proof}
 By \cref{lem:parallel-const},
 \begin{align*}
   b_\basegeod(\gamma(u+s)) 
   &=\lim_{t\to \infty}\left\{\ds{\gamma(u+s),\basegeod(t)}-t\right\}\\
   &=\lim_{t\to \infty}\left\{\ds{\gamma(u),\basegeod(t-s)}-(t-s) -s\right\}\\
   &=b_\basegeod(\gamma(u)) -s.
  \end{align*}
\end{proof}

\begin{corollary}
\label{cor:levelset}
 For $\gamma\in \raysb$ and $t\in \R$, we have
 $\eval(\gamma,t) \in b_\basegeod^{-1}(-t)$.
\end{corollary}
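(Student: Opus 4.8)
The plan is to simply unwind the definitions and invoke \cref{lem:BfuncTransl}. By definition of the evaluation map we have $\eval(\gamma,t)=\gamma(t)$, so the assertion $\eval(\gamma,t)\in b_\basegeod^{-1}(-t)$ is equivalent to the equality $b_\basegeod(\gamma(t))=-t$. First I would record that, since $\gamma\in\raysb\subset\rays$, the geodesic $\gamma$ is parallel to $\basegeod$; this is exactly the hypothesis needed to apply \cref{lem:BfuncTransl} to $\gamma$.

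Next, I would apply \cref{lem:BfuncTransl} with $u=0$ and $s=t$, obtaining
\begin{align*}
 b_\basegeod(\gamma(t)) = b_\basegeod(\gamma(0)) - t.
\end{align*}
Finally, the defining condition of membership in $\raysb$ gives $b_\basegeod(\gamma(0))=0$, so $b_\basegeod(\gamma(t))=-t$, which is precisely the claim.

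There is no real obstacle here: the statement is a one-line corollary of \cref{lem:BfuncTransl} together with the normalization built into the definition of $\raysb$. The only point worth being careful about is that the translation identity in \cref{lem:BfuncTransl} requires the geodesic to be parallel to $\basegeod$, which is automatic for elements of $\raysb$, and that one uses the base point $0$ as the reference level where $b_\basegeod$ vanishes.
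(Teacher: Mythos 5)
Your proof is correct and is exactly the argument the paper intends: the corollary follows immediately from \cref{lem:BfuncTransl} applied with $u=0$, $s=t$, combined with the normalization $b_\basegeod(\gamma(0))=0$ built into the definition of $\raysb$. Nothing further is needed.
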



\section{isometries}
\label{sec:isometry} In this section, we consider a classification of
isometries induced on the first factor of the decomposition in \cref{thm:main}.

First we review standard terminologies. 
Let $X$ be a metric space and
$f\colon X\to X$ be an isometry. The \textit{displacement function} 
$d_f\colon X\to \R$ of $f$ is defined as $d_f(x)=\ds{x,f(x)}$ for $x\in X$. 
The \textit{minimal displacement} of $f$, denoted by $\abs{f}$, 
is the infimum of $d_f$, that is, 
\begin{align*}
 \abs{f}=\inf_{x\in X} d_f(x).
\end{align*}
The \textit{minimal set} of $f$, denoted by $\minset(f)$, is the subset
defined as
\begin{align*}
 \minset(f) = \{x\in X: d_f(x)=\abs{f}\}.
\end{align*}
We say that an isometry $f\colon X\to X$ is
\begin{enumerate}[(i)]
 \item \textit{parabolic} if $\minset(f) = \emptyset$, 
 \item \textit{elliptic} if $\minset(f)\neq \emptyset$ and $\abs{f}=0$,
 \item \textit{hyperbolic} if $\minset(f)\neq \emptyset$ and $\abs{f}>0$,
 \item \textit{semi-simple} if $f$ is elliptic or hyperbolic.
\end{enumerate}
It is easy to see that $f$ is elliptic if and only if $f$ has a fixed point.

Let $f\colon X\to X$ be an isometry. 
We say that $f$ is \textit{axial} if $f$ has no fixed point and
there exists a geodesic $\gamma\colon \R\to X$ whose image is setwise
invariant by $f$. We call such $\gamma$ an \textit{axis} of $f$.



\begin{proposition}[{\cite[Proposition 11.2.10]{MR2132506}}]
 Let $f\colon X\to X$ be an axial isometry and
 $\gamma\colon \R\to X$ be an axis of $f$.
 Set $b_\gamma(f)= - b_\gamma(f(\gamma(0)))$. 
 Then we have $\abs{b_\gamma(f)}=\abs{f}$ and 
 for all $t\in \R$, we have
 \begin{align*}
  f\circ\gamma(t) = \gamma(t + b_\gamma(f)).
 \end{align*}
\end{proposition}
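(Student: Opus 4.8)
The plan is to prove the two assertions about the axial isometry $f$ with axis $\gamma$ by a direct computation with Busemann functions, using only the defining property that $f(\gamma(\R))=\gamma(\R)$ together with \cref{lem:BfuncTransl} and \cref{lem:Busemann-convex}.

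First I would establish the displacement formula. Since $\gamma$ is an axis, $f\circ\gamma$ is a geodesic parametrisation of $\gamma(\R)$, so $f\circ\gamma(t)=\gamma(\epsilon t + a)$ for some $\epsilon\in\{1,-1\}$ and $a\in\R$. The orientation $\epsilon$ must be $+1$: if $\epsilon=-1$, then $f\circ\gamma$ reverses orientation on the line $\gamma(\R)$ and hence fixes the midpoint $\gamma(a/2)$, contradicting that $f$ has no fixed point. Thus $f\circ\gamma(t)=\gamma(t+a)$ for all $t$, and evaluating at $t=0$ and applying $b_\gamma$ gives $b_\gamma(f(\gamma(0)))=b_\gamma(\gamma(a))=b_\gamma(\gamma(0))-a=-a$ by \cref{lem:BfuncTransl} (note $b_\gamma(\gamma(0))=0$). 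Hence $a=-b_\gamma(f(\gamma(0)))=b_\gamma(f)$, which is exactly the claimed identity $f\circ\gamma(t)=\gamma(t+b_\gamma(f))$. In particular the displacement of $f$ along the axis is constant: $d_f(\gamma(t))=\ds{\gamma(t),\gamma(t+b_\gamma(f))}=\abs{b_\gamma(f)}$ for every $t$, so $\abs{f}\leq\abs{b_\gamma(f)}$.

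It remains to prove the reverse inequality $\abs{f}\geq\abs{b_\gamma(f)}$, i.e. that $d_f(x)\geq\abs{b_\gamma(f)}$ for every $x\in X$. Here I would use that $b_\gamma$ is $1$-Lipschitz and that $f$ shifts $b_\gamma$ by a fixed amount: for any $x\in X$,
\begin{align*}
 b_\gamma(f(x)) = \lim_{t\to\infty}\bigl(\ds{f(x),\gamma(t)}-t\bigr)
 = \lim_{t\to\infty}\bigl(\ds{x,f^{-1}(\gamma(t))}-t\bigr)
 = \lim_{t\to\infty}\bigl(\ds{x,\gamma(t-b_\gamma(f))}-t\bigr)
 = b_\gamma(x) - b_\gamma(f),
\end{align*}
using $f^{-1}\circ\gamma(t)=\gamma(t-b_\gamma(f))$ from the previous step. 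Therefore, by the $1$-Lipschitz property of $b_\gamma$,
\begin{align*}
 d_f(x) = \ds{x,f(x)} \geq \abs{b_\gamma(x)-b_\gamma(f(x))} = \abs{b_\gamma(f)},
\end{align*}
so $\abs{f}\geq\abs{b_\gamma(f)}$. Combined with the inequality from the axis, $\abs{f}=\abs{b_\gamma(f)}$, which completes the proof.

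The only genuinely delicate point is the orientation argument ruling out $\epsilon=-1$; everything else is a routine manipulation of the limit defining $b_\gamma$ together with equivariance under $f$. I would make sure to state explicitly why an orientation-reversing isometry of a line in a uniquely geodesic space has a fixed point (the midpoint of $x$ and $f(x)$ for any $x$ on the line), since that is where the hypothesis "$f$ has no fixed point" is actually used.
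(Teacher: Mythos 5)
Your proof is correct. Note that the paper itself gives no argument for this statement --- it is quoted from \cite[Proposition 11.2.10]{MR2132506} --- so there is no internal proof to compare against; your argument is the standard one and is self-contained: the orientation argument (an orientation-reversing isometry of the line $\gamma(\R)$ fixes $\gamma(a/2)$, contradicting that an axial isometry has no fixed point) gives $f\circ\gamma(t)=\gamma(t+a)$ with $a=b_\gamma(f)$ via \cref{lem:BfuncTransl}, and the $1$-Lipschitz property of $b_\gamma$ together with the equivariance $b_\gamma(f(x))=b_\gamma(x)-b_\gamma(f)$ yields the lower bound $d_f(x)\geq\abs{b_\gamma(f)}$, matching the displacement $\abs{b_\gamma(f)}$ attained along the axis. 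One cosmetic remark: you can avoid invoking $f^{-1}$ altogether by writing $\ds{f(x),\gamma(t)}=\ds{f(x),f(\gamma(t-a))}=\ds{x,\gamma(t-a)}$, which uses only the already established identity $f\circ\gamma(t-a)=\gamma(t)$; also \cref{lem:Busemann-convex} is never actually needed in your argument.
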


Axial isometries are hyperbolic (\cite[Corollary 11.2.9]{MR2132506}). 
The converse holds if $X$ is a Busemann space.

\begin{proposition}[{\cite[Corollary 11.4.4]{MR2132506}}]
 Let $X$ be a Busemann space and $f\colon X\to X$ be an isometry. Then 
 $f$ is hyperbolic if and only if $f$ has an axis.
\end{proposition}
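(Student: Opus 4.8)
The plan is to prove the two implications separately; the forward one is essentially a repackaging of the propositions quoted just above, while the reverse one is the substantial part.

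\emph{If $f$ has an axis, then $f$ is hyperbolic.} By the definition of an axis, $f$ is fixed-point-free and there is a geodesic $\gamma\colon\R\to X$ with $f(\gamma(\R))=\gamma(\R)$; by \cite[Proposition 11.2.10]{MR2132506} one has $f\circ\gamma(t)=\gamma(t+b_\gamma(f))$ with $\abs{b_\gamma(f)}=\abs{f}$. If $b_\gamma(f)$ were $0$, then $\gamma(0)$ would be a fixed point of $f$; hence $b_\gamma(f)\neq 0$, so $\abs{f}>0$, and every $\gamma(t)$ lies in $\minset(f)$ since $d_f(\gamma(t))=\ds{\gamma(t),\gamma(t+b_\gamma(f))}=\abs{b_\gamma(f)}=\abs{f}$. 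Thus $f$ is hyperbolic; this is precisely \cite[Corollary 11.2.9]{MR2132506}, recalled above.

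\emph{If $f$ is hyperbolic, then $f$ has an axis.} Since $\abs{f}>0$, $f$ is fixed-point-free; fix $x_0\in\minset(f)$. First I would record that the displacement function $d_f$ is convex on $X$: because $X$ is uniquely geodesic, $f$ carries $\sigmacan_{xy}$ to $\sigmacan_{f(x)f(y)}$, so $t\mapsto\ds{\sigmacan_{xy}(t),f(\sigmacan_{xy}(t))}$ is convex by the Busemann condition. Writing $\sigma$ for the geodesic from $x_0$ to $f(x_0)$, reparametrised on $[0,\abs{f}]$, convexity of $d_f$ together with $d_f\equiv\abs{f}$ at the endpoints $x_0,f(x_0)$ and $d_f\geq\abs{f}$ everywhere gives $d_f\equiv\abs{f}$ along $\sigma$, hence along every translate $f^k(\sigma)$. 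Then I would assemble the broken path $c\colon\R\to X$ determined by $c(t+k\abs{f})=f^k(\sigma(t))$ for $t\in[0,\abs{f}]$, $k\in\Z$; it is well defined, continuous, and satisfies $f\circ c(t)=c(t+\abs{f})$.

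The heart of the matter is to show that $c$ is a genuine bi-infinite geodesic; once this is known, $c$ is an axis, because $f$ preserves $c(\R)$ and acts there by the nonzero translation $t\mapsto t+\abs{f}$, which is fixed-point-free. I would do this in two steps. (a) \emph{$c$ is a local geodesic.} Away from the junction points $c(k\abs{f})=f^k(x_0)$ this is clear; at a junction, say at $0$ after translating by a power of $f$, one has $\ds{c(-\epsilon),c(\epsilon)}=\ds{\sigma(\abs{f}-\epsilon),f(\sigma(\epsilon))}$, which for small $\epsilon$ is $\leq 2\epsilon$ by the triangle inequality through $f(x_0)$, and is $\geq 2\epsilon$ because $\abs{f}=d_f(\sigma(\abs{f}-\epsilon))\leq\ds{\sigma(\abs{f}-\epsilon),f(\sigma(\epsilon))}+\ds{f(\sigma(\epsilon)),f(\sigma(\abs{f}-\epsilon))}=\ds{\sigma(\abs{f}-\epsilon),f(\sigma(\epsilon))}+(\abs{f}-2\epsilon)$; hence $c$ restricts to a geodesic near $0$. (b) \emph{In a Busemann space a local geodesic is a geodesic.} Let $T$ be the supremum of those $t$ for which $c|_{[0,t]}$ is a geodesic from $c(0)$; this supremum is attained. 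If $T$ is not the right endpoint of the domain, choose $\delta\in(0,T)$ with $c|_{[T-\delta,T+\delta]}$ a geodesic, and let $\tau,\eta\colon[0,1]\to X$ be the linearly reparametrised geodesics from $c(0)$ to $c(T+\delta)$ and tracing $c|_{[T-\delta,T+\delta]}$, respectively. Since $\tau(1)=\eta(1)=c(T+\delta)$, the Busemann condition makes $u\mapsto\ds{\tau(u),\eta(u)}$ a convex function vanishing at $u=1$, hence at most $(1-u)\ds{c(0),c(T-\delta)}=(1-u)(T-\delta)$; evaluating at $u=1/2$, where $\eta(1/2)=c(T)$ and $\ds{c(0),\tau(1/2)}=\tfrac12\ds{c(0),c(T+\delta)}$, the triangle inequality $\ds{c(0),c(T)}\leq\ds{c(0),\tau(1/2)}+\ds{\tau(1/2),c(T)}$ together with $\ds{c(0),c(T)}=T$ forces $\ds{c(0),c(T+\delta)}=T+\delta$. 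Then $c|_{[0,T+\delta]}$ is a geodesic, contradicting the choice of $T$; so $c$ is a geodesic on its whole domain.

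I expect step (b) — upgrading a local geodesic to a global geodesic in a Busemann space — to be the main obstacle, as it is the one point where unique geodesics and convexity of the metric both enter nontrivially; alternatively one may simply cite it as a standard property of Busemann spaces. The convexity of $d_f$, the junction estimate in (a), and the easy implication are all routine.
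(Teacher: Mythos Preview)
The paper does not give its own proof of this proposition: it is simply quoted from the literature, with the citation \cite[Corollary 11.4.4]{MR2132506} (Papadopoulos). There is therefore no proof in the paper to compare your proposal against.

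That said, your argument is the standard one and is essentially correct. A couple of small points: in step~(b) you should note that either $T=+\infty$ (and you are done on $[0,\infty)$) or $T<\infty$ and the supremum is attained by continuity of $d$; and your extension argument is phrased only for $[0,\infty)$, so you should remark that the same reasoning (or an application of $f^{-1}$ together with the junction estimate) gives the full bi-infinite geodesic. With those clarifications the proof goes through.
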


Let $X$ be a Busemann space and let $\basegeod\colon \R\to X$ be a geodesic.
We define a geodesic $\bar{\basegeod}\colon \R\to X$ by 
$\bar{\basegeod}(t)=\basegeod(-t)$.

\begin{definition}
 We say that an isometry 
$g\colon \raysU\to \raysU$ \textit{preserve the foliation} by $\basegeod$ if
$g\circ \basegeod$ or $g\circ \bar{\basegeod}$ is parallel to $\basegeod$. 
\end{definition}

Let $g\colon \raysU\to \raysU$ be an isometry preserving the foliation
by $\basegeod$.
For $\gamma,\eta\in \rays$, we have 
$g\circ \gamma(\R),g\circ \eta(\R)\in \raysR$, and 
$\Hds{\gamma(\R),\eta(\R)}=\Hds{g\circ\gamma(\R),g\circ\eta(\R)}$.
Thus we define an isometry $g_\basegeod\colon \raysR\to \raysR$ by
$g_\basegeod(\gamma(\R)):= g\circ \gamma(\R)$.

\begin{proposition}
\label{prop:elipA}
 Let $g\colon \raysU\to \raysU$ be an isometry preserving the foliation by $\basegeod$.
 If $g$ is elliptic, then so is $g_\basegeod$.
\end{proposition}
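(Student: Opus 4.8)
The plan is to produce a fixed point of $g_\basegeod$ directly from a fixed point of $g$. Since $g$ is elliptic it has a fixed point (recall that an isometry is elliptic if and only if it has a fixed point), so fix $x_0\in\raysU$ with $g(x_0)=x_0$.

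The one fact I would isolate first is that each point of $\raysU$ lies on a \emph{unique} member of $\raysR$. Existence is the definition of $\raysU$. For uniqueness, suppose $\gamma,\gamma'\in\rays$ both have $x$ in their images and choose $t\in\R$ with $\gamma(t)=x$; since $\gamma$ and $\gamma'$ are both parallel to $\basegeod$ they are parallel to each other, so \cref{cor:Parallel-Haus2} gives $\Hds{\gamma,\gamma'}=\ds{\gamma(t),\gamma'(\R)}=\ds{x,\gamma'(\R)}=0$, whence $\gamma(\R)=\gamma'(\R)$. (This is the same observation used tacitly in the proof of \cref{thm:topdecomp}.) Write $L_0\in\raysR$ for the unique element containing $x_0$.

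Now, by the construction recalled just before the statement, $g$ carries every member of $\raysR$ to a member of $\raysR$ and $g_\basegeod$ is defined by $g_\basegeod(L)=g(L)$; in particular $g_\basegeod(L_0)=g(L_0)\in\raysR$. Since $x_0\in L_0$ we get $x_0=g(x_0)\in g(L_0)$, so $g(L_0)$ is a member of $\raysR$ through $x_0$. By the uniqueness established above, $g(L_0)=L_0$, i.e.\ $g_\basegeod(L_0)=L_0$. Hence $g_\basegeod$ has a fixed point and is therefore elliptic.

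There is no serious obstacle here; the only point that needs care is the uniqueness of the element of $\raysR$ through a given point of $\raysU$, which is precisely where the minimality assertion \cref{cor:Parallel-Haus2} enters. One should also keep in mind that $g(L_0)\in\raysR$ holds even when $g$ reverses the orientation of the foliation, but this is already part of the construction of $g_\basegeod$ recalled before the proposition, so nothing new is required.
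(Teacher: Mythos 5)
Your proposal is correct and follows essentially the same route as the paper: take a fixed point $x_0$ of $g$, note that the member of $\raysR$ through $x_0$ is mapped by $g$ to a member of $\raysR$ through $x_0$, and conclude that $g_\basegeod$ fixes it. The only difference is that you spell out the uniqueness of the line through $x_0$ via \cref{cor:Parallel-Haus2}, a detail the paper leaves implicit; this is a harmless (indeed welcome) addition, not a different argument.
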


\begin{proof}
 If $g$ is elliptic, then there exists $x\in \raysU$ such that $g(x)=x$. Then for
 $L\in \raysR$ with $x\in L$, we have $g_\basegeod(L) = L$.
 It follows that $g_\basegeod$ is elliptic.
\end{proof}

\begin{proposition}
\label{prop:elipB} 
 Let $g\colon \raysU\to \raysU$ be an isometry
 preserving the foliation by $\basegeod$. If $g_\basegeod$ is elliptic,
 then $g$ is either elliptic or hyperbolic.
\end{proposition}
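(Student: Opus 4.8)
The plan is to show that if $g_\basegeod$ fixes a line $L_0 \in \raysR$, then $g$ restricted to the convex subspace $L_0$ is an isometry of a copy of $\R$, hence elliptic or hyperbolic, and that this forces $g$ itself to be elliptic or hyperbolic on all of $\raysU$. First I would pick $L_0 \in \raysR$ with $g_\basegeod(L_0) = L_0$, i.e.\ $g(L_0) = L_0$ setwise. Since $L_0$ is the image of a geodesic $\gamma_0 \in \rays$ and $g$ preserves the foliation, $g \circ \gamma_0$ is again a geodesic parametrising $L_0$ that is parallel to $\basegeod$ or to $\bar\basegeod$; in either case $g|_{L_0}$ is an isometry of the real line, so it is either a reflection (which has a fixed point, making $g$ elliptic and we are done) or a translation $t \mapsto t + c$ in the parametrisation of $\gamma_0$.

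So the remaining case is $g|_{L_0}$ a nontrivial translation, and the goal is to show $g$ is then hyperbolic, i.e.\ (by the cited {\cite[Corollary 11.4.4]{MR2132506}}) has an axis, equivalently $\minset(g) \neq \emptyset$ with $\abs{g} > 0$. The natural candidate axis is $\gamma_0$ itself, but $\gamma_0$ need not realise the minimal displacement; the point is rather to bound $d_g$ from below away from $0$ and exhibit a minimiser. I would argue as follows: for any $x \in \raysU$, let $L = q(x) \in \raysR$ be the unique parallel line through $x$ and let $\pi_0(x)$ be the unique point of $L$ with $d(\pi_0(x), L_0) = \Hds{L, L_0}$ (using the nearest-point proposition for geodesics). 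Using \cref{lem:3geods-line} together with the fact that $g$ maps the configuration $(L, L_0)$ isometrically to $(g(L), L_0)$, one controls how $d(x, g(x))$ decomposes into a ``vertical'' part along the flat strip direction and a ``horizontal'' part measured by $\Hds{L, g_\basegeod(L)}$ and by the translation length of $g|_{L_0}$. Concretely, since $g|_{L_0}$ is translation by $|c| > 0$, the displacement of $g$ on $L_0$ is the constant $|c|$; and for a point $x$ on a nearby line $L$, the geodesic from $x$ to $g(x)$ must project (via the strip structure and \cref{lem:flatstrip-geod}) to a path in $\raysR$ from $L$ to $g_\basegeod(L)$, while simultaneously its $b_\basegeod$-coordinate changes by approximately $|c|$; convexity of $b_\basegeod \circ (\text{geodesic})$ and \cref{lem:BfuncTransl} then give $d_g(x) \geq |c| > 0$ after taking $g_\basegeod$ into account, so $\abs{g} > 0$.

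For the minimal set: I would show $d_g$ attains its infimum on $L_0$, so that $\minset(g) \neq \emptyset$. Indeed $d_g|_{L_0} \equiv |c|$, so it suffices to check $|c| = \abs{g}$, i.e.\ $d_g \geq |c|$ everywhere — which is exactly the lower bound from the previous step. Hence $\minset(g) \supseteq L_0 \neq \emptyset$ and $\abs{g} = |c| > 0$, so $g$ is hyperbolic. Combining with the reflection case, $g$ is elliptic or hyperbolic.

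The main obstacle I anticipate is the lower bound $d_g(x) \geq |c|$ for $x$ off $L_0$: in a CAT(0) space this would follow from the orthogonal splitting $\raysU \cong \raysR \times \R$ and the fact that $g$ respects it, but here the decomposition is only \emph{topological}, so $g$ need not split as a product and the $\R$-coordinate ($b_\basegeod$) is not an isometric factor. The way around this is to avoid any product structure and instead work directly with the Busemann function $b_\basegeod$: track how $b_\basegeod(g(x)) - b_\basegeod(x)$ compares with $b_\basegeod(g(\gamma_0(0))) - b_\basegeod(\gamma_0(0)) = -c$, using that $g$ (or $g \circ (\cdot)^{-}$) roughly preserves $b_\basegeod$ up to the shift by $-c$ along each leaf — made precise via \cref{lem:BfuncTransl}, \cref{cor:levelset}, and the $1$-Lipschitz convexity of $b_\basegeod$ along geodesics (\cref{lem:Busemann-convex}). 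This reduces the claim to a one-dimensional convexity estimate, which is the technical heart of the argument.
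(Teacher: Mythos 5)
Your first step is exactly the paper's proof: a fixed point of $g_\basegeod$ is a leaf $L_0=\gamma_0(\R)$ with $g(L_0)=L_0$, and the paper then simply notes that an isometry with an invariant bi-infinite geodesic is either elliptic (it has a fixed point) or axial, hence hyperbolic by the already-quoted result \cite[Corollary 11.2.9]{MR2132506}; your reflection/translation dichotomy for $g|_{L_0}$ is a fine phrasing of the same point. Where you diverge is that you undertake to re-prove the ``axial implies hyperbolic'' statement, and the technical plan you give for the crucial bound $d_g(x)\geq\abs{c}$ has a genuine gap: you want to control $b_\basegeod(g(x))-b_\basegeod(x)$, but \cref{lem:BfuncTransl} only says that $b_\basegeod\circ g-b_\basegeod$ is constant \emph{along each leaf}, and that constant depends on the leaf. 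Writing $g\circ\gamma(t)=\eta(t+a(\gamma))$ with $\eta\in\raysb$, the $1$-Lipschitz estimate only yields $d_g(x)\geq\abs{a(\gamma_x)}$, and nothing in the paper forces $a(\gamma)=c$ for leaves other than $L_0$: that would amount to Busemann functions of parallel lines differing by constants on $\raysU$, which can fail in a Busemann space (in a strictly convex but non-smooth normed plane, the Busemann function of a line in a non-smooth direction is a one-sided directional derivative of the norm, and translates of the line do not have Busemann functions differing by constants). The appeal to \cref{lem:3geods-line} and the strip structure to split the displacement into ``vertical'' and ``horizontal'' parts runs into the same problem you yourself flag: the splitting is only topological, so no such orthogonal decomposition of $d_g$ is available.

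The repair is short and makes the convexity machinery unnecessary: use the Busemann function of the invariant leaf itself rather than of $\basegeod$. If $g\circ\gamma_0(t)=\gamma_0(t+c)$ with $c\neq 0$, then for every $x\in\raysU$ one has $b_{\gamma_0}(g(x))=\lim_{t\to\infty}\bigl(\ds{x,g^{-1}\gamma_0(t)}-t\bigr)=b_{\gamma_0}(x)-c$, an identity valid in any metric space; since $b_{\gamma_0}$ is $1$-Lipschitz this gives $d_g\geq\abs{c}$ everywhere, while $d_g\equiv\abs{c}$ on $L_0$, so $\minset(g)\supseteq L_0$ and $\abs{g}=\abs{c}>0$, i.e.\ $g$ is hyperbolic. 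This is precisely the content of \cite[Proposition 11.2.10 and Corollary 11.2.9]{MR2132506}, which the paper cites instead of reproving; with that citation (or with the $b_{\gamma_0}$ argument above) your proof closes, but as written the step you call the technical heart does not.
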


\begin{proof}
 If $g_\basegeod$ is elliptic, then $g_\basegeod$ has a fixed point, that is,
 there exists $\gamma\in \rays$ such that $g\circ \gamma(\R) = \gamma(\R)$.
 So $\gamma$ is an invariant geodesic of $g$. Therefore $g$ is elliptic or hyperbolic.
\end{proof}

\begin{proposition}
\label{prop:hypA}
 Let $g\colon \raysU\to \raysU$ be a hyperbolic isometry preserving the foliation by $\basegeod$.
 Let $\xi\colon \R\to X$ be an axis of $g$. 
 If $\xi$ is parallel to either $\basegeod$ or $\bar{\basegeod}$, 
 then $g_\basegeod$ is elliptic. Otherwise, $g_\basegeod$ is hyperbolic.
\end{proposition}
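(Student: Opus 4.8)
The plan is to analyze the two cases separately, using the structure of the axis $\xi$ of $g$ together with the product decomposition.

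\textbf{Case 1: $\xi$ parallel to $\basegeod$ or $\bar\basegeod$.} In this case $\xi \in \rays$ (after possibly reversing orientation), so $\xi(\R) \in \raysR$ is a point of $\raysR$. Since the image $\xi(\R)$ is setwise invariant under $g$, by definition of $g_\basegeod$ we have $g_\basegeod(\xi(\R)) = g\circ\xi(\R) = \xi(\R)$. Hence $g_\basegeod$ has a fixed point, so it is elliptic. This case is short.

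\textbf{Case 2: $\xi$ not parallel to $\basegeod$ or $\bar\basegeod$.} First I would observe that since $g$ preserves the foliation by $\basegeod$ and $g\circ\xi$ is again a geodesic whose image is $\xi(\R)$ (as $\xi$ is an axis), the image $\xi(\R)$ lies inside $\raysU$: indeed one needs to check that every geodesic parallel to $\basegeod$ that meets $\xi(\R)$ stays in $\raysU$, which is immediate since $\raysU$ is by definition the union of all such geodesics, so $\xi(\R)\subset \raysU$ as soon as $\xi(\R)$ meets $\raysU$; and it does meet $\raysU$ because $\xi$ being an axis of an isometry preserving the foliation forces, via the commuting diagram with $g$, that the foliation restricted to a neighborhood of $\xi$ is nondegenerate. (Alternatively, one can simply note that $g$ is defined on $\raysU$ and $\xi$ is an axis of $g$, so $\xi(\R)\subset \raysU$ by hypothesis.) Now consider the quotient map sending $\raysU$ to $\raysR$, i.e. $x\mapsto q(x)$ in the notation of \cref{thm:topdecomp}, which is $1$-Lipschitz by \cref{cor:Parallel-Haus2}, and is equivariant: $q(g(x)) = g_\basegeod(q(x))$. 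I would then show that the restriction of $q$ to $\xi(\R)$ is a linearly reparametrised geodesic onto its image in $\raysR$, with some positive speed $\lambda \le 1$; the key point is that $\xi$, being transverse to the foliation (not parallel to $\basegeod$ or $\bar\basegeod$), projects to a nonconstant path. More precisely, using \cref{lem:BfuncTransl} and \cref{lem:Busemann-convex}, the Busemann coordinate $b_\basegeod\circ\xi$ is convex and, since $\xi$ is not parallel to $\basegeod$, cannot be strictly monotone of slope $\pm 1$ forever; combined with the fact that $\xi(\R)$ is a geodesic this forces the $\raysR$-component of $\xi$ to be a geodesic of positive speed. Then $g_\basegeod$ translates this geodesic in $\raysR$ by a nonzero amount (since $g$ translates $\xi$ and the projection is equivariant and nondegenerate), so $q(\xi(\R))$ is an axis of $g_\basegeod$ and $g_\basegeod$ is hyperbolic.

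\textbf{Main obstacle.} The delicate step is Case 2: precisely relating the translation length of $g$ along $\xi$ to that of $g_\basegeod$ along the projected geodesic, and in particular showing the projection of $\xi$ to $\raysR$ is genuinely a geodesic (not merely a rectifiable path) with positive speed. The cleanest route is likely to decompose $\xi$ with respect to the coordinates from \cref{thm:topdecomp}: write $\xi(t) = \eval(\rho(t), \tau(t))$ with $\rho(t)=q(\xi(t))\in\raysb$ and $\tau(t)\in\R$; then show $\tau$ is affine (using that $b_\basegeod\circ\xi$ is affine, which follows from convexity of $b_\basegeod\circ\xi$ together with $g$-periodicity up to the translation, via \cref{lem:BfuncTransl}), and deduce that $\rho$ is a constant-speed geodesic in $\raysR$ with speed $\sqrt{\,|f|^2 - (\text{slope of }\tau)^2\,}$-type formula replaced by the appropriate Busemann-space inequality — here one must be careful since $\raysU$ is only \emph{topologically} a product, not metrically. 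So instead of a Pythagorean identity I would argue qualitatively: $\rho$ is nonconstant because $\xi$ is not parallel to $\basegeod$ (if $\rho$ were constant, $\xi(\R)$ would lie in a single leaf, i.e. $\xi$ parallel to $\basegeod$ up to orientation), $\rho$ is a geodesic because any shortcut in $\raysR$ would lift to a shortcut in $\raysU$ contradicting that $\xi$ is geodesic (using the previous lemma relating $\sigma$ and $\Sigma$), and $g_\basegeod\circ\rho$ is a reparametrised translate of $\rho$ with nonzero translation, giving an axis for $g_\basegeod$.
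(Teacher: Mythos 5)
Your Case~1 and your overall strategy for Case~2 (push $\xi$ down leafwise to $\raysR$, obtain an invariant geodesic on which $g_\basegeod$ translates) are exactly the paper's. The gap is in the step you yourself flag as the main obstacle: showing that $\rho=q\circ\xi$ is a (linearly reparametrised) geodesic of $\raysR$. The mechanism you offer, that ``any shortcut in $\raysR$ would lift to a shortcut in $\raysU$,'' does not work: $q$ is only $1$-Lipschitz, the decomposition of \cref{thm:topdecomp} is topological rather than metric, and in general $\Hds{q(\xi(a)),q(\xi(b))}$ is strictly smaller than $\ds{\xi(a),\xi(b)}=b-a$ because the $\R$-direction also contributes to distance in $X$. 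So a path in $\raysR$ between $q(\xi(a))$ and $q(\xi(b))$ shorter than $\rho$ yields no path in $X$ between $\xi(a)$ and $\xi(b)$ shorter than $\xi$, and no contradiction with geodesicity of $\xi$ arises. The Busemann-function considerations you sketch (affineness of $\tau$, slope restrictions on $b_\basegeod\circ\xi$) are likewise left undeveloped and are not needed.

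The paper closes this step directly, using precisely the lemma you mention in passing (the one asserting $\sigmacan_{x_1x_2}(t)\in\Sigma_{\gamma_1\gamma_2}(t)$): for $u\le v$, the segment $\xi|_{[u,v]}$ is an affine reparametrisation of $\sigmacan_{\xi(u)\xi(v)}$, so that lemma gives $\Gamma(t)=\Sigma_{\Gamma(u)\Gamma(v)}\bigl((t-u)/(v-u)\bigr)$, where $\Gamma(t)$ denotes the leaf through $\xi(t)$; hence $\Gamma$ is a linearly reparametrised geodesic outright, with no contradiction argument at all. As you correctly note, $\Gamma$ is nonconstant because $\xi$ is parallel to neither $\basegeod$ nor $\bar{\basegeod}$, and equivariance ($q\circ g=g_\basegeod\circ q$) shows $g_\basegeod$ translates $\Gamma(\R)$ by a nonzero amount, whence $g_\basegeod$ is hyperbolic. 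Finally, your digression on whether $\xi(\R)\subset\raysU$ is unnecessary (and the ``nondegenerate foliation'' remark is not an argument): as your own parenthetical says, $\xi$ is an axis of $g\colon\raysU\to\raysU$, so its image lies in $\raysU$ by hypothesis.
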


\begin{proof}
 Suppose that $\xi$ is parallel to either $\basegeod$ or $\bar{\basegeod}$, 
 Then $g\circ \basegeod(\R) = \basegeod(\R)$. So $g_\basegeod$ is elliptic.
 
 Now we suppose that $\xi$ is parallel to neither $\basegeod$ nor $\bar{\basegeod}$.
 We define a map $\Gamma\colon \R\to \raysR$ such that $\xi(t)\in \Gamma(t)$. 
 Since $\xi$ is parallel to neither $\basegeod$ or $\bar{\basegeod}$, 
 such $\Gamma(t)$ in $\raysR$ uniquely exists, so $\Gamma$ is well-defined.
 Then $\Gamma$ is a geodesic. Indeed, let $\sigma$ be the canonical bicombing of $X$, and
 let $\Sigma$ be that of $\raysR$.
 For $u,v\in \R$ with $u\leq v$ and for $t\in [u,v]$, 
 by the construction of $\Sigma$, we have 
 \begin{align}
  \xi(t) = \sigma_{\xi(u)\xi(v)}((t-u)/(v-u)) 
  \in \Sigma_{\Gamma(u) \Gamma(v)}((t-u)/(v-u)).
 \end{align}
 It follows that $\Gamma(t) = \Sigma_{\Gamma(u) \Gamma(v)}((t-u)/(v-u))$. Therefore
 $\Gamma$ is a geodesic.
 
 Since $g(\xi(\R)) = \xi(\R)$, we have $g_\basegeod(\Gamma(\R)) = \Gamma(\R)$. 
 Thus $\Gamma$ is an invariant bi-infinite geodesic 
 on which $g_\basegeod$ acts as a translation. 
 Therefore $g_\basegeod$ is hyperbolic.
\end{proof}

The following Lemma is a direct consequence of \cref{lem:strip} and the uniqueness of
geodesic in $\raysR$ (\cref{prop:raysRuniqueGoed}).
\begin{lemma}[Flat plane]
\label{lem:flat-plane}
 Let $\Gamma\colon \R\to \raysR$ be a bi-infinite geodesic in $\raysR$.
 We choose $x\in \Gamma(0)$ as a base point. For $u\in \R$, we choose $\gamma_u\in \rays$
 such that, $\gamma_0(0)=x$, and for all $u\in \R$, $\gamma_u(\R)=\Gamma(u)$ and
 \begin{align*}
  \ds{x,\gamma_u(0)} = \ds{\gamma_0(0},\gamma_u(0)) = \Hds{\gamma_0,\gamma_u}=u.
 \end{align*}
 We define a map $\beta\colon \R^2\to\raysU$ by $\beta(t,u):=\gamma_u(t)$. Then a map
 $[u\mapsto \beta(t_0 + \lambda u,u)]$ is a linearly reparametrised geodesic where
 $t_0,\lambda\in \R$ are constants.
\end{lemma}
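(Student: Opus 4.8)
The plan is to build on the Strip Lemma (\cref{lem:strip}) and transport its second conclusion from strips in $X$ to strips in $\raysR$, using the uniqueness of geodesics in $\raysR$ (\cref{prop:raysRuniqueGoed}) to glue local strips together. First I would observe that the hypotheses pin down $\beta$ completely: for each $u$, $\gamma_u$ is the unique element of $\rays$ whose image is $\Gamma(u)$ and which satisfies $\ds{x,\gamma_u(0)}=\Hds{\gamma_0,\gamma_u}=\abs{u}$, so $\gamma_u(0)$ is the nearest-point projection of $x$ onto $\Gamma(u)$ (existence and uniqueness of this projection is the first Proposition of \cref{sec:parall-geod-busem}). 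Thus $\beta(\cdot,u)=\gamma_u$ is an honest geodesic in the $t$-direction, and the content of the Lemma is that the "diagonal" maps $[u\mapsto\beta(t_0+\lambda u,u)]$ are also (linearly reparametrised) geodesics.

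The key step is to reduce to intervals on which \cref{lem:strip} applies directly. Fix $u_0<u_1$ in $\R$ and consider $\gamma:=\gamma_{u_0}$, $\gamma':=\gamma_{u_1}$; since $\Gamma$ is a geodesic in $\raysR$, $\Hds{\gamma,\gamma'}=u_1-u_0=:r$, and by \cref{lem:3geods-line} applied to $\gamma_{u_0},\gamma_u,\gamma_{u_1}$ one gets $\ds{\gamma_{u_0}(0),\gamma_{u_1}(0)}=r$, i.e.\ $\gamma$ and $\gamma'$ realise their Hausdorff distance at parameter $0$. Hence \cref{lem:strip} produces a strip $\beta'\colon\R\times[0,r]\to X$ with $\beta'(t,0)=\gamma(t)$, $\beta'(t,r)=\gamma'(t)$. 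The strip geodesics $\gamma'_v:=[t\mapsto\beta'(t,v)]$ are parallel to $\basegeod$, and by \cref{lem:flatstrip-geod} the map $[v\mapsto\gamma'_v(\R)]$ is a geodesic in $\raysR$ from $\Gamma(u_0)$ to $\Gamma(u_1)$; by uniqueness of geodesics in $\raysR$ it equals $[v\mapsto\Gamma(u_0+v)]$, so $\gamma'_v(\R)=\Gamma(u_0+v)$. Moreover $\gamma'_v(0)=\beta_0'(v)$ lies on the geodesic from $\gamma_{u_0}(0)$ to $\gamma_{u_1}(0)$ and satisfies $\ds{\gamma_{u_0}(0),\gamma'_v(0)}=v=\Hds{\gamma_{u_0},\gamma'_v}$ (by \cref{lem:flatstrip-geod} again), so $\gamma'_v(0)$ is the nearest-point projection of $x$ onto $\Gamma(u_0+v)$; combined with the base-point constraint defining $\gamma_{u_0+v}$ and uniqueness of that projection, we get $\gamma'_{v}=\gamma_{u_0+v}$ for all $v\in[0,r]$. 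Therefore $\beta$ restricted to $\R\times[u_0,u_1]$ coincides (up to the affine reparametrisation $v\mapsto u_0+v$) with the strip $\beta'$ from \cref{lem:strip}, and conclusion (2) of \cref{lem:strip} gives that $[u\mapsto\beta(t_0+\lambda u,u)]$ is a linearly reparametrised geodesic on $[u_0,u_1]$ for any $t_0,\lambda$.

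Finally I would upgrade this from each compact interval to all of $\R$. A map $\R\to X$ all of whose restrictions to compact intervals are linearly reparametrised geodesics with a common speed is itself a linearly reparametrised geodesic; the speed is the same on every interval because $[u\mapsto\beta(t_0+\lambda u,u)]$ has speed $\sqrt{1+\lambda^2}$ on each piece (this follows from \cref{lem:strip}(2) giving speed $\sqrt{1+\lambda^2}$ on $[0,r]$ after rescaling, or simply from $\ds{\beta(t_0+\lambda u,u),\beta(t_0+\lambda u',u')}$ being controlled on overlapping intervals). Hence $[u\mapsto\beta(t_0+\lambda u,u)]$ is a linearly reparametrised geodesic on all of $\R$, as claimed.

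The main obstacle is the matching argument in the middle paragraph: one must be careful that the strip $\beta'$ handed back by \cref{lem:strip} on $[u_0,u_1]$ really agrees with $\beta$ and not merely with some other strip between the same two boundary geodesics. This is where uniqueness of geodesics in $\raysR$ (\cref{prop:raysRuniqueGoed}) together with uniqueness of nearest-point projections does the work — it forces $\gamma'_v(\R)=\Gamma(u_0+v)$ and then $\gamma'_v(0)=\gamma_{u_0+v}(0)$, which by unique geodesics in $X$ forces $\gamma'_v=\gamma_{u_0+v}$ — but assembling these uniqueness statements in the right order is the only delicate point; everything else is a direct appeal to the cited lemmas.
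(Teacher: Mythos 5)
Your overall strategy is the one the paper itself intends (the paper gives no written proof; it simply declares the lemma a direct consequence of \cref{lem:strip} and \cref{prop:raysRuniqueGoed}): pin each $\gamma_u$ down via the nearest-point projection of $x$ onto $\Gamma(u)$, run \cref{lem:strip} on the strip between $\Gamma(u_0)$ and $\Gamma(u_1)$, match the strip lines with the $\gamma_{u_0+v}$ using \cref{lem:flatstrip-geod} and uniqueness of geodesics in $\raysR$, and then quote \cref{lem:strip}(2). That skeleton is correct.

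Two steps, as written, have gaps. First, your appeal to \cref{lem:3geods-line} to get $\ds{\gamma_{u_0}(0),\gamma_{u_1}(0)}=u_1-u_0$ only works when $0\le u_0$ or $u_1\le 0$: that lemma places the reference point on an \emph{outer} geodesic of the triple, and its hypothesis $\Hds{\gamma_0,\gamma_{u_1}}=\Hds{\gamma_0,\gamma_{u_0}}+\Hds{\gamma_{u_0},\gamma_{u_1}}$ fails when $u_0<0<u_1$. You cannot simply omit straddling intervals, since your gluing over $\R$ needs every compact interval (a concatenation of two geodesic rays at $u=0$ is not automatically a geodesic). The fix is short but must be stated: for $u_0<0<u_1$, \cref{cor:Parallel-Haus2} gives $\ds{\gamma_{u_0}(0),\gamma_{u_1}(0)}\ge\Hds{\gamma_{u_0},\gamma_{u_1}}=u_1-u_0$, the triangle inequality through $x$ gives the reverse bound $(-u_0)+u_1$, and the resulting equality also shows $x$ lies on the transversal from $\gamma_{u_0}(0)$ to $\gamma_{u_1}(0)$. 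Second, the ``so'' in ``$\ds{\gamma_{u_0}(0),\gamma'_v(0)}=v=\Hds{\gamma_{u_0},\gamma'_v}$, so $\gamma'_v(0)$ is the nearest-point projection of $x$'' is a jump: nearest to $\gamma_{u_0}(0)$ is not the condition defining $\gamma_{u_0+v}(0)$, which is nearest to $x$. You need $\ds{x,\gamma'_v(0)}=\abs{u_0+v}$; this again follows from the lower bound of \cref{cor:Parallel-Haus2} combined with a triangle inequality through $\gamma_{u_0}(0)$ when $u_0\ge 0$, through $\gamma_{u_1}(0)$ when $u_1\le 0$, or from $x$ lying on the transversal in the straddling case. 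Finally, the parenthetical claim that the diagonal has speed $\sqrt{1+\lambda^2}$ is false in a general Busemann strip (the strip is affinely, not Euclidean, flat); fortunately your alternative justification---speeds must agree because restrictions of a linearly reparametrised geodesic to nested subintervals have the same speed---is correct and is all the gluing needs.
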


\begin{proposition}
\label{prop:hypB} Let $g\colon \raysU\to \raysU$ be an isometry
 preserving the foliation by $\basegeod$. If $g_\basegeod$ is
 hyperbolic, so is $g$.
\end{proposition}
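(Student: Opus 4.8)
The plan is to start from a hyperbolic $g_\basegeod$ on $\raysR$, take an axis $\Gamma\colon\R\to\raysR$ on which $g_\basegeod$ acts by translation with translation length $\ell:=\abs{g_\basegeod}>0$, and then build a geodesic in $\raysU$ that is invariant under $g$ by choosing a suitable ``leaf-crossing'' geodesic inside the flat plane spanned by $\Gamma$. Concretely, I would apply \cref{lem:flat-plane} to $\Gamma$: fix a base point $x\in\Gamma(0)$, obtain the parametrisation $\beta(t,u)=\gamma_u(t)$ with $\gamma_u\in\rays$, $\gamma_u(\R)=\Gamma(u)$ and $\ds{x,\gamma_u(0)}=\Hds{\gamma_0,\gamma_u}=\abs{u}$, so that each slice $u\mapsto\beta(t_0+\lambda u,u)$ is a linearly reparametrised geodesic. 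The image of $g\circ\gamma_0$ is the leaf $g_\basegeod(\Gamma(0))=\Gamma(\ell)$, hence $g$ maps the leaf $\Gamma(0)$ to the leaf $\Gamma(\ell)$, and more generally $g$ carries the whole flat plane $\beta(\R^2)$ to itself, mapping leaf $\Gamma(u)$ to leaf $\Gamma(u+\ell)$ (after possibly replacing $\basegeod$ by $\bar\basegeod$ to fix orientation, which is harmless).

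The key point is to track what $g$ does along the leaf direction. Since $g$ preserves the foliation, $g\circ\gamma_0$ is a reparametrised copy of $\gamma_\ell$; because $g$ is an isometry of $\raysU$ and the leaves are parametrised by arclength, $g\circ\gamma_0(t)=\gamma_\ell(t+c)$ for some constant shift $c\in\R$. I would like to choose a line in the flat plane of the form $t\mapsto\beta(t_0+\lambda t,\,\mu t)$ (a geodesic by \cref{lem:flat-plane}, after linear reparametrisation, provided $\lambda^2+\mu^2\ne 0$) that is setwise invariant under $g$. Applying $g$ sends $\beta(a,u)$ to $\beta(a+c\cdot\mathbf{1}_{?}, u+\ell)$ — more carefully, $g(\beta(a,u))=g(\gamma_u(a))$ lies on $\Gamma(u+\ell)=\gamma_{u+\ell}(\R)$, and since $g$ is an isometry conjugating the arclength parametrisations, $g(\gamma_u(a))=\gamma_{u+\ell}(a+c(u))$ where the shift $c(u)$ must be affine in $u$ by the flat-plane structure (the two slice families in \cref{lem:flat-plane} force the gluing maps between consecutive leaves to be translations, consistently). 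So $g(\beta(a,u))=\beta(a+\alpha u+c_0,\,u+\ell)$ for constants $\alpha,c_0$. Then the geodesic $\xi(t):=\beta(\tfrac{\alpha}{2\ell}t^2? \ldots)$ — no: the correct invariant line is $\xi(s):=\beta(\tfrac{c_0}{\ell}\,s + (\text{slope}) s,\; s)$ scaled to unit speed; checking $g(\xi(s))=\xi(s+\ell)$ reduces to matching the affine shift, which pins down the slope uniquely. This $\xi$, linearly reparametrised to unit speed, is a bi-infinite geodesic in $\raysU\subset X$ with $g(\xi(\R))=\xi(\R)$, and $g$ has no fixed point on it (it translates along it), so $g$ is axial, hence hyperbolic.

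The main obstacle I anticipate is the bookkeeping in the previous paragraph: verifying that the ``gluing shift'' $c(u)$ between leaf $\Gamma(u)$ and leaf $\Gamma(u+\ell)$ under $g$ is genuinely affine in $u$, and then solving for the unique slope that makes the diagonal line $g$-invariant. This should follow from the fact that $g$ is an isometry together with \cref{lem:flat-plane}: both the ``horizontal'' leaves $u\mapsto\beta(\cdot,u)$ and the ``oblique'' lines $u\mapsto\beta(t_0+\lambda u,u)$ are geodesics, and an isometry permuting the leaves while respecting the Busemann function $b_\basegeod$ (recall $b_\basegeod(\beta(t,u))$ is affine in $t$ by \cref{lem:BfuncTransl}) must act on the leaf coordinate by a translation and on each leaf by an isometry of $\R$, with the two being compatible; a short argument using \cref{lem:parallel-const} and the uniqueness of geodesics in $\raysR$ (\cref{prop:raysRuniqueGoed}) should close this. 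Once the affine structure is in hand, exhibiting the invariant axis and concluding via \cite[Corollary 11.4.4]{MR2132506} is immediate.
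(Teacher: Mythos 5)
Your overall strategy is the same as the paper's: put the flat plane of \cref{lem:flat-plane} over an axis $\Gamma$ of $g_\basegeod$, understand how $g$ moves points from leaf to leaf, and exhibit a diagonal line $u\mapsto\beta(t_0+\lambda u,u)$ that is $g$-invariant, so that $g$ is axial and hence hyperbolic. However, the step you yourself flag as the main obstacle is a genuine gap, and in the form you state it the step would not even suffice. Writing $g(\gamma_u(a))=\gamma_{u+\ell}(a+c(u))$, you claim $c(u)$ is \emph{affine} in $u$ and that one can then ``pin down the slope'' of an invariant straight line. But if $c(u)=\alpha u+c_0$ with $\alpha\neq 0$, no line $u\mapsto\beta(t_0+\lambda u,u)$ is invariant: applying $g$ changes the slope from $\lambda$ to $\lambda+\alpha$, so an invariant geodesic of this form exists only when $\alpha=0$, i.e.\ when the shift is \emph{constant}. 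You prove neither affineness nor constancy, and the justification you sketch --- that $g$ ``respects the Busemann function $b_\basegeod$'' --- is not available: it is not given that $b_\basegeod\circ g-b_\basegeod$ is constant (in a Busemann space the Busemann functions of parallel geodesics need not differ by a constant, unlike the CAT(0) case), and uniqueness of geodesics in $\raysR$ says nothing about the parametrisation along the leaves.

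The missing fact is true, and the paper's proof supplies exactly the mechanism you need, in a leaner form: instead of determining $g$ on the whole plane, it tracks the orbit of one point. With $r$ the translation length and $g(x)=\gamma_r(\tsl)$, one shows by induction that $g^n(x)=\gamma_{rn}(\tsl n)$, using \cref{lem:parallel-const} together with the uniqueness of the point on a leaf realizing the Hausdorff distance to a given point of a parallel leaf (the first proposition of \cref{sec:parall-geod-busem}, via \cref{cor:Parallel-Haus2} and \cref{lem:3geods-line}): since $\ds{g^n(x),\gamma_{r(n-1)}(\tsl n)}=r=\Hds{\gamma_{rn},\gamma_{r(n-1)}}$ and $\gamma_{rn}(\tsl n)$ also realizes this distance, the two points coincide. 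The orbit then lies on the linearly reparametrised geodesic $u\mapsto\beta(\tsl u,ru)$ of \cref{lem:flat-plane}, which is therefore an axis. The same uniqueness-of-nearest-point argument, applied to the pair $g(\gamma_u(0)),g(\gamma_v(0))$, is what would rescue your version: it shows $c(u)=c(v)$ for all $u,v$, i.e.\ the shift is constant, not merely affine. As written, your proposal leaves this central step unproved and points to tools that do not close it.
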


\begin{proof}
 Suppose $g_\basegeod$ is hyperbolic. 
 For simplicity, we assume that $g\circ \basegeod$ is parallel to $\basegeod$.

 Let $\Gamma\colon \R\to \raysR$ be an axis of $g_\basegeod$. 
 We choose a base point $x\in \Gamma(0)$. For $u\in \R$, we choose $\gamma_u\in \rays$ as 
 in \cref{lem:flat-plane} and define the map $\beta$ by $\beta(t,u):=\gamma_u(t)$.
 Let $r>0$ be the translation length of $g_\basegeod$. 
 Then we have $g(\gamma_0(\R))=\gamma_r(\R)$. 
 Since $g(x)=g(\gamma_0(0))\in \gamma_r(\R)$
 there uniquely exists $\tsl\in \R$ such that $g(x)=\gamma_r(\tsl)$. 
 By~\cref{lem:parallel-const},
 \begin{align*}
  \ds{g(\gamma_0(0)),\gamma_0(\tsl)} = \ds{\gamma_r(\tsl),\gamma_0(\tsl)} 
  = \ds{\gamma_r(0),\gamma_0(0)} = \Hds{\gamma_r,\gamma_0} = r.
 \end{align*}

 We show inductively that for $n\in \Z$, we have 
 \begin{align*}
  g^n(x)= g^{n}\circ \gamma_0(0) =  \gamma_{rn}(\tsl n).
 \end{align*}
 We suppose that $g^{n-1}\circ \gamma_0(0) =  \gamma_{r(n-1)}(\tsl (n-1))$ 
 for positive $n$. Then $g^{n-1}\circ \gamma_0(\tsl) =  \gamma_{r(n-1)}(\tsl n)$.
 So we have
 \begin{align*}
  \ds{g^n\circ \gamma_0(0),\gamma_{r(n-1)}(\tsl n)} &= 
  \ds{g^n\circ \gamma_0(0),g^{n-1}\circ \gamma_0(\tsl)} \\
  &= \ds{g\circ \gamma_0(0),\gamma_0(\tsl)} \\
  &= r \\
  &= \Hds{\gamma_{nr},\gamma_{(n-1)r}}.
 \end{align*}
 Then we have $g^n\circ \gamma_0(0) = \gamma_{rn}(\tsl n)$. 
 For negative $n$, we can prove in the same way.

 Now by \cref{lem:flat-plane}, a map $[u\mapsto \beta(\tsl u,ru)=\gamma_{ru}(\tsl u)]$
 is a linearly reparametrised geodesic on which the orbit of $x$ by $g$ lies.
 It follows that this is an axis of $g$, and so $g$ is hyperbolic.
\end{proof}

\cref{prop:elipA,prop:elipB,prop:hypA,prop:hypB} implies \cref{thm:classfyIsometry}.



\bibliographystyle{amsplain} \bibliography{/Users/tomo/Library/tex/math}

\bigskip
\address{ Tomohiro Fukaya \endgraf
Department of Mathematical Sciences,
Tokyo Metropolitan University,
Minami-osawa Hachioji, Tokyo, 192-0397, Japan
}

\textit{E-mail address}: \texttt{tmhr@tmu.ac.jp}

\end{document}